\documentclass[a4paper,reqno]{amsart}
\usepackage{graphicx} 
\usepackage{amsmath,amsthm,amssymb,amscd,amsthm,mathtools,mathrsfs}
\usepackage{listings}
\usepackage{color}   
\usepackage{hyperref}
\usepackage{physics}

\hypersetup{
    colorlinks=false, 
    linktoc=all,     
}
\lstdefinestyle{mystyle}{
    basicstyle=\ttfamily\footnotesize,
    breakatwhitespace=false,         
    breaklines=true,                                     
    keepspaces=true,                                     
    numbersep=5pt,                  
    showspaces=false,                
    showstringspaces=false,
    showtabs=false,                  
    tabsize=2
}

\newtheorem{conjecture}{Conjecture}[section]
\newtheorem{theorem}{Theorem}[section]
\newtheorem{lemma}{Lemma}[section]

\newtheorem{subprop}{Proposition}[section]

\begin{document}
\title{Explicit estimates of the weighted sum $S(x)=\sum_{n \leq x} (-2)^{\Omega(n)} \log\bigl(\frac{x}{n}\bigr).$}
\author[Riddhi Manna] {Riddhi Manna}
\address{
    School of Science, University of New South Wales (Canberra), Northcott Drive, Campbell, ACT 2600, Australia
}
\email{r.manna@unsw.edu.au}
\subjclass[2020]{11M41, 11N56}
\keywords{prime omega function, explicit number theory}
\begin{abstract}
We study the oscillatory arithmetic function $(-2)^{\Omega(n)}$, where $\Omega(n)$ counts the number of prime factors of $n$, with multiplicity. Sun conjectured a bound on its partial sums $W(x) = \sum_{n \leq x}^{} (-2)^{\Omega(n)}$ as $|W(x)| < x$ for all $x \geq 3078$. In this direction, we obtain new bounds for its logarithmically weighted average
\begin{equation*}
    S(x)=\sum_{n \leq x} (-2)^{\Omega(n)} \log\biggl(\frac{x}{n}\biggr).
\end{equation*}
Using complex-analytic methods such as the log-weighted Perron's formula, we computed the bound $|S(x)| \leq 1.6x$.
\end{abstract}
\maketitle
\section{Introduction}
In \cite[Conjecture 1.1]{sun}, Sun proposes the following elegant conjecture on the sum
\begin{equation*}
    W(x) = \sum_{n \leq x}^{} (-2)^{\Omega(n)}. 
\end{equation*}
\begin{conjecture} \label{sun}
For all $x \geq 3078$, 
    \[
    |W(x)| < x. 
    \]
\end{conjecture}
Sun's conjecture has been verified to be true by Mossinghoff and Trudgian \cite{oscillations} for all $x \leq 2.5 \cdot 10^{14}$. In a recent paper by Johnston, Leong and Tudzi \cite{Johnston2024NewBA} this conjecture is shown to be true up to the order of $x$. They proved the following theorem.
\begin{subprop} \label{sun bound}
There exists a constant $C > 0$ such that for all sufficiently large $x$,
\begin{equation*}
    |W(x)| < Cx.
\end{equation*}
That is, $W(x) = O(x)$. Moreover, one can take $C = 2260$ for all $x \geq 1$.
\end{subprop}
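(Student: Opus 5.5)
The approach is to write $(-2)^{\Omega(n)}$ as a convolution of a sparse, explicitly controllable function with a smoother one, and to bound $W(x)$ by an elementary hyperbola-type argument. Let $f(n)=(-2)^{\Omega(n)}$. This is completely multiplicative, so its Dirichlet series is
\[
F(s)=\prod_p\Bigl(1+\frac{2}{p^s}\Bigr)^{-1}.
\]
Compare $F$ with $\zeta(2s)^{-1}\zeta(s)^{-2}\cdots$; more naturally, compare it with $\zeta(s)^{-2}=\prod_p(1-p^{-s})^2$. The local factors agree to first order, but $f$ is not bounded by a divisor-type function, and the obstruction is the prime $2$. Since $\sum_{n\le x} f(n)$ is dominated by terms $n=2^k m$ with $|f|=2^{\Omega(n)}$ potentially of size $x$, a crude bound fails: $\sum_{n\le x}2^{\Omega(n)}\asymp x\log^2 x$. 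So cancellation must be exploited.

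\textbf{Key decomposition.} The plan is to factor $f=g*h$, where $h$ is the completely multiplicative function equal to $(-2)^{\Omega}$ on powers of $2$ and to $\mu$-like behaviour elsewhere. Concretely:
\begin{enumerate}
\item Handle the prime $2$ separately. Write $n=2^k m$ with $m$ odd, so that
\[
W(x)=\sum_{k\ge0}(-2)^k\,W_{\mathrm{odd}}(x/2^k),
\]
where $W_{\mathrm{odd}}(y)=\sum_{m\le y,\ m\text{ odd}}(-2)^{\Omega(m)}$.
\item For odd $m$, the Euler factors $(1+2p^{-s})^{-1}$ with $p\ge3$ make $F_{\mathrm{odd}}(s)=\zeta(s)^{-2}G(s)$, where $G$ converges absolutely for $\Re s>1/2$; its coefficients come from $(1-p^{-s})^{-2}(1+2p^{-s})^{-1}=1+O(p^{-2s})$.
\item Then $W_{\mathrm{odd}}(y)=\sum_{d}g(d)\,M_2(y/d)$, where $M_2(y)=\sum_{n\le y}(\mu*\mu)(n)$. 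By the prime number theorem with explicit error (or even just $M_2(y)=o(y)$ with explicit constants), this gives $W_{\mathrm{odd}}(y)=O(y/\log y)$ or at least $O(y)$ with an explicit constant.
\end{enumerate}

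The main obstacle is the alternating sum over $k$ in the first step. The weights $(-2)^k$ grow, while $W_{\mathrm{odd}}(x/2^k)$ shrinks only like $x/2^k$, so the terms are each of size about $x$. There are $\log_2 x$ of them, so bounding them individually gives only $O(x\log x)$. To beat this, I would use the identity $W(x)+2W(x/2)=W_{\mathrm{odd}}(x)$, which follows directly from the recursion. This turns the problem into showing that the linear recurrence $W(x)=-2W(x/2)+E(x)$, with $E(x)=W_{\mathrm{odd}}(x)$, stays $O(x)$. Iterating it reproduces the growing sum, so boundedness requires $E(x)$ to have a specific cancellation along the dyadic scale $x, x/2, x/4,\dots$. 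Equivalently, in Dirichlet series terms, $F(s)=(1+2^{1-s})^{-1}F_{\mathrm{odd}}(s)$, and $1+2^{1-s}$ vanishes at $s=1+i\pi(2j+1)/\log 2$ on the line $\Re s=1$.

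So the plan is to use Perron's formula on $F(s)x^s/s$, shift the contour to the left of $\Re s=1$, and pick up simple poles at these points $\rho_j$. The poles are simple, and $F_{\mathrm{odd}}$ is regular and nonzero there, since $\zeta(s)^{-2}$ is analytic on $\Re s=1$. Each residue is therefore of size $x$, with coefficients decaying like $1/|\rho_j|$ times $|\zeta(\rho_j)|^{-2}$. Summing these residues to an $O(x)$ total requires control of $1/\zeta$ on $\Re s=1$ at the specific heights $\pi(2j+1)/\log 2$. Handling the truncated Perron error and the contour just left of $\Re s=1$, where only a zero-free region is available, is the delicate part. I would use a smoothed (log-weighted) Perron kernel for absolute convergence, and then undo the smoothing with a differencing argument, as is standard. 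Making all constants explicit gives an admissible value such as $C=2260$.
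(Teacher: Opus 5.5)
The paper gives no proof of this statement; it is quoted from Johnston--Leong--Tudzi. Your sketch therefore has to stand on its own, and it has a genuine gap at exactly the step you call the main obstacle.

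Your set-up is right: $W(x)=\sum_{k\ge0}(-2)^kW_{\mathrm{odd}}(x/2^k)$, and $F_{\mathrm{odd}}=G/\zeta^2$ with $G$ absolutely convergent a little to the left of $\Re s=1$. But you misdiagnose what is needed. No cancellation across dyadic scales is required; the triangle inequality suffices once $W_{\mathrm{odd}}$ has a saving that is summable over dyadic scales. If $|W_{\mathrm{odd}}(y)|\le\varepsilon(y)\,y$ for $y\ge1$ with $\varepsilon$ nonincreasing, then
\[
|W(x)|\le\sum_{k\ge0}2^k\,|W_{\mathrm{odd}}(x/2^k)|\le x\sum_{0\le j\le\log_2 x}\varepsilon(2^j).
\]
With $\varepsilon\ll1$ this is your $x\log x$. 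With the bound $\varepsilon(y)\ll1/\log y$ from your own step 3 it is already $x\log\log x$. With $\varepsilon(y)\ll\exp(-c\sqrt{\log y})$ it gives $W(x)=O(x)$ at once. That bound follows from your step 3: split $W_{\mathrm{odd}}(y)=\sum_d g(d)M_2(y/d)$ at $d=\sqrt y$. For $d\le\sqrt y$ use $M_2(z)\ll z\exp(-c\sqrt{\log z})$. For $d>\sqrt y$ use $|M_2(z)|\le z(1+\log z)$ together with $\sum_d|g(d)|d^{-0.7}<\infty$.

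That quantitative decay is the missing idea. It is also where an explicit constant has to come from. You would need an explicit bound of shape $|M_2(z)|\le c\,z/\log^2 z$ (or explicit Mertens-type bounds feeding into it), explicit control of $\sum_d|g(d)|/d^{\sigma}$, and a direct check of small $x$ to cover all $x\ge1$. None of this is in your sketch, so the claim that one obtains $C=2260$ is unsupported.

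The route you take instead does not close the gap, for three reasons.

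\emph{The unsmoothed residue series.} With the kernel $x^s/s$, the residue at $\rho_j$ is $x^{\rho_j}F_{\mathrm{odd}}(\rho_j)/(\rho_j\log 2)$. Formally these sum to $x\,P(\log_2 x)$ with $P$ of period $2$. Since $W$ jumps by $(-2)^k$ at $x=2^k$, $P$ jumps by $\pm1$ at the integers, and a function with a jump cannot have an absolutely convergent Fourier series. Hence $\sum_j|F_{\mathrm{odd}}(\rho_j)|/|\rho_j|=\infty$. Bounding the residue sum would require proving uniform boundedness of a conditionally convergent series whose coefficients involve $1/\zeta(\rho_j)^2$, and you do not address this.

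\emph{The smoothing and differencing step.} The log-weighted kernel is exactly what the paper uses, but it only bounds $S(x)=\int_1^x W(t)t^{-1}\,dt$; the paper never returns to $W$. Undoing the smoothing via $W(x)\approx(S(x(1+h))-S(x))/\log(1+h)$ costs $\max_{x\le t\le x(1+h)}|W(t)-W(x)|$. Bounded non-circularly by $\sum_{x<n\le x(1+h)}2^{\Omega(n)}$, this is typically of size $hx\log^2x$, so you need $h\ll(\log x)^{-2}$. Then the triangle inequality on the differenced residues gives roughly $x\sum_{|z_k|\le 1/h}|H(z_k)|/|z_k|$. By the divergence just noted this is unbounded, of order $x\log\log x$. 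So the Perron detour, as described, does not give $O(x)$ unless you supply exactly the cancellation that the elementary bound above makes unnecessary.

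\emph{A minor point.} $G$ converges absolutely only for $\Re s>\log2/\log3$, not for $\Re s>1/2$, because the factor $(1+2\cdot 3^{-s})^{-1}$ has poles on that line. This is harmless here.
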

$W(x)$ has jump discontinuities of size $x$ whenever $x$ is a power of $2$. Therefore there are infinitely many values of $x$ for which
\begin{equation*}
    |W(x)|\geq\frac{x}{2}.
\end{equation*}
This is because there exists a power of $2$ between $\frac{x}{2}$ and $x$ which will produce a jump of magnitude at least $\frac{x}{2}$.
Grosswald \cite[Theorem 2]{Grosswald} had previously worked on the related sum $\sum_{n \leq x}^{} 2^{\Omega(n)}$. He showed the following theorem.
\begin{subprop}
For some absolute constants $c_1$ and $c_2$,
\begin{equation}\label{grosswald}
    \sum_{n \leq x}^{} 2^{\Omega(n)} = c_1 x \log^2x + c_2 x \log x + O(x).
\end{equation}
\end{subprop}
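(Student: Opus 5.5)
The plan is to study the Dirichlet series of $2^{\Omega(n)}$ and reduce the asymptotic to a residue computation. Since $2^{\Omega}$ is completely multiplicative,
\[
F(s)=\sum_{n\ge1}\frac{2^{\Omega(n)}}{n^s}=\prod_p\Bigl(1-\frac{2}{p^s}\Bigr)^{-1},
\]
valid for $\Re s>1$. The factor at $p=2$, namely $(1-2^{1-s})^{-1}$, has a pole at $s=1$. So I split it off and compare the remaining product with $\zeta(s)^2$:
\[
F(s)=\frac{1}{1-2^{1-s}}\,\zeta(s)^2\,G(s),\qquad G(s)=\prod_{p\ge3}\Bigl(1-\frac{2}{p^s}\Bigr)^{-1}\Bigl(1-\frac{1}{p^s}\Bigr)^{2}\Bigl(1-\frac1{2^s}\Bigr)^{2}.
\]
The local factor of $G$ at $p\ge3$ is $1+O(p^{-2\sigma})$, so $G$ is holomorphic and bounded on $\Re s\ge 1/2+\delta$. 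Hence $F$ has a triple pole at $s=1$. Note also that $1-2^{1-s}$ vanishes on the whole line $s=1+2\pi i k/\log 2$. Those poles are simple, and each sits at a zero-free double pole of $\zeta^2$ only when $k=0$.

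I would avoid Perron's formula and contour estimates, which would force me to control the poles on the line $\Re s=1$. Instead I will use an elementary convolution. Write $2^{\Omega}=a*b$, where $a$ is the completely multiplicative function supported on powers of $2$ with $a(2^k)=2^k$, and $b$ is multiplicative with $\sum_n b(n)n^{-s}=\zeta(s)^2G(s)$. Then $b=d*g$, where $d$ is the divisor function and $\sum_n |g(n)|n^{-\sigma}<\infty$ for $\sigma>1/2$.

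The first step is the standard estimate
\[
\sum_{n\le y}d(n)=y\log y+(2\gamma-1)y+O(\sqrt y).
\]
Convolving with $g$ and using the absolute convergence of $\sum_n |g(n)|n^{-3/4}$ gives
\[
B(y):=\sum_{n\le y}b(n)=G(1)\,y\log y+c\,y+O(y^{3/4}).
\]
The error $O(y^{3/4})$ comes from $\sum_{m\le y}|g(m)|(y/m)^{1/2}$ together with the tails $\sum_{m>y}|g(m)|y/m\ll y^{3/4}$.

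The second step is to sum over powers of $2$:
\[
\sum_{n\le x}2^{\Omega(n)}=\sum_{2^k\le x}2^kB(x/2^k).
\]
Substituting the expansion of $B$, the main term becomes
\[
\sum_{k\le \log_2 x}x\bigl(G(1)(\log x-k\log2)+c\bigr).
\]
Summing this over $k\le K=\lfloor\log x/\log 2\rfloor$ gives a quadratic polynomial in $\log x$ times $x$, with leading coefficient $c_1=G(1)/(2\log 2)$, plus errors. The errors need care, and this is the main obstacle.

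The first source of error is the fractional part of $\log x/\log 2$, which enters via $K=\log_2 x-\theta$ with $\theta\in[0,1)$. It produces terms of the shape $x\cdot O(\log x)\cdot\theta$. These are not $O(x)$, so I must check them. Writing
\[
\sum_{k=0}^{K}(L-k\ell)=(K+1)L-\ell K(K+1)/2
\]
with $L=\log x$ and $\ell=\log 2$, and setting $K=L/\ell-\theta$, the terms linear in $\theta L$ are $-\theta L+\theta L=0$. So the fluctuation cancels to first order, leaving only $O(x)$. The second-order $\theta$ pieces are bounded, and the constant-$c$ term contributes $c\,x(K+1)=c\,x\log x/\log2+O(x)$. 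This determines $c_2$.

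The second source of error is the accumulated $O(y^{3/4})$ terms, which contribute
\[
\sum_k 2^k (x/2^k)^{3/4}=x^{3/4}\sum_{2^k\le x}2^{k/4}\ll x.
\]
The terms with $x/2^k<1$ vanish. This yields the stated formula with $O(x)$ error.
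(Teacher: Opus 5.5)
Your argument is correct. It differs from the paper only in that the paper gives no proof of this statement: it quotes Grosswald's Theorem~2, and later uses only the upper bound $\sum_{n\le u}2^{\Omega(n)}\ll u\log^2 u$ when bounding the Perron error term.

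Your factorisation $2^{\Omega}=a*d*g$ turns the problem into the Dirichlet divisor asymptotic plus a sum over powers of $2$. It needs no contour integration and no information about $\zeta$ near $\Re s=1$. It also makes the constants explicit. You get $c_1=G(1)/(2\log2)=1/(8C_2\log 2)$, where $C_2=\prod_{p\ge3}p(p-2)/(p-1)^2$ is the twin-prime constant, and $c_2=G(1)/2+c/\log2$. The identity $\sum_{k\le K}(L-k\ell)=L^2/(2\ell)+L/2+\ell\theta(1-\theta)/2$ shows that the first sum feeds $c_2$ too, not only the $c$-term.

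The same bookkeeping (substituting $j=K-k$ in the error sum) gives more. The $O(x)$ term equals $x\,\Phi(\{\log x/\log 2\})+O(x^{3/4}\log x)$ with $\Phi$ bounded. This is the elementary counterpart of the simple poles of $(1-2^{1-s})^{-1}$ at $1+2\pi i k/\log 2$, which are the analogues of the paper's $z_k$.

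The price is that your method relies on $\zeta^2$ sitting in the numerator. For $(-2)^{\Omega(n)}$ the same skeleton would need partial sums of $\mu*\mu$ with a saving of more than one logarithm. That is prime-number-theorem-strength input, which is why the paper uses Perron's formula and a zero-free region there.

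There are two small inaccuracies, neither fatal.

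First, $G$ is not holomorphic on $\Re s>1/2$. The factor $(1-2\cdot 3^{-s})^{-1}$ has poles on $\Re s=\log 2/\log 3\approx 0.631$. Correspondingly $g(3^k)=2^{k-2}$ for $k\ge 2$, so $\sum_n |g(n)|n^{-\sigma}$ converges only for $\sigma>\log2/\log3$ (compare the paper's lemma on its own $G$). Since you only ever use $\sigma=3/4$, the proof is unaffected.

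Second, replacing $\sum_{m\le y}g(m)/m$ by $G(1)$ in the coefficient of $y\log y$ costs $O(y^{3/4}\log y)$, not $O(y^{3/4})$. The same holds for replacing $\sum_{m\le y}g(m)\log m/m$ by $-G'(1)$. Either run the argument with exponent $0.7$ instead of $3/4$, or note that $x^{3/4}\sum_{2^k\le x}2^{k/4}\log(x/2^k)\ll x$ anyway.
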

This shows that there is a $\log^2x$ cancellation when we replace the $2$ in the sum by a $-2$.\\

The Conjecture \ref{sun} was proposed by Sun in the context of other historical conjectures, such as the conjectures about the sum
\begin{equation*}
    L(x) = \sum_{n \leq x}^{} (-1)^{\Omega(n)}.
\end{equation*}
P\'olya \cite{Pólya1919} showed that if the sign of the sum $L(x)$ is eventually constant, then the Riemann hypothesis would follow. He reported that after $L(1)=1$, for $x$ up to approximately $1500$, $L(x)$ was not positive. Later, Ingham \cite{Ingham1942} in his 1942 paper noted that if $L(x)$ is eventually of constant sign, then in addition to the Riemann hypothesis, the zeroes of $\zeta(s)$ would all be simple, and there would exist infinitely many linear dependencies over $\mathbb{Z}$ of the imaginary parts of the zeroes of $\zeta(s)$ in the upper half plane. Similarly, Tura\'n \cite{Turan1948} hypothesized that if the weighted sum $T(x) = \sum_{n \leq x}^{} \frac{(-1)^{\Omega(n)}}{n}$ is of constant sign for sufficiently large $x$, then it implies the Riemann hypothesis. He noted that $T(x)$ is never negative over $2\leq x\leq1000$. The other results about the simplicity and linear dependance of the zeros of $\zeta(s)$ also follows from this. Despite the numerical evidence in favour of both these conjectures, surprisingly in 1958, Haselgrove \cite{Haselgrove} proved that $L(x)$ and $T(x)$ both change sign infinitely often.\\

Another conjecture with a similar connection to the Riemann hypothesis is Merten's conjecture. We define the M\"obius function in the following way.
\begin{equation*}
    \mu(n)=
\begin{cases}
    & (-1)^{\Omega(n)}, \quad \text{if n is square-free}\\
    & 0, \quad \text{otherwise}
\end{cases}
\end{equation*}
Merten's conjecture is the statement that for all $n \geq 1$,
\begin{equation*}
    \bigg|\sum_{n \leq x}^{}\mu(n)\bigg| \leq \sqrt{x}.
\end{equation*}
It is well known that Merten's conjecture implies the Riemann hypothesis. In his 1942 paper, Ingham showed that Merten's conjecture implies that there are infinitely many linear dependencies over the rationals among the ordinates of
the zeros of the zeta function on the critical line in the upper half-plane. There was a large amount of computational evidence in favour of this conjecture, but it was disproved in 1985 by Andrew Odlyzko and Herman te Riele \cite{OdlyzkoRiele+1985+138+160}. In fact, $M(x)$ is conjectured to grow much faster than $\sqrt{x}$.\\

 Due to this long history of similar conjectures being disproved despite strong computational evidence, it could be expected that Sun's conjecture is also not true.
In this direction, we estimate $S(x)$, a smooth version of the sum, which is
\begin{equation*}
    S(x) = \sum_{n \leq x} (-2)^{\Omega(n)} \log\biggl(\frac{x}{n}\biggr).
\end{equation*}
\begin{theorem}\label{S(x) convergent sum}
    Let $z_k$ be
    \begin{equation*}
        z_k = 1 - \frac{i(2k+1)\pi}{\log 2}.
    \end{equation*}
    Also, we have the odd term Dirichlet series
    \begin{equation*}
        H(s) = \sum_{\substack{n=1\\n  \text{ odd}}}^{\infty} \frac{(-2)^{\Omega(n)}}{n^s}.
    \end{equation*}
    Then for $T= {(\log x)}^{1+\delta}$ and $\delta > 0$ we have
    \begin{align*}
        S(x) &= \frac{1}{\log2}\sum_{k \in \mathbb{Z}} \frac{H(z_k)x^{z_k}}{{z_k}^2} + O\biggl(\frac{x(\log T)^{\frac43}(\log \log T)^{\frac23}}{T^2}\biggr)\\&+O\biggl(\frac{x\log^2x}{T^2}\biggr)+O\biggl(\frac{x(\log T)^{\frac43}(\log \log T)^{\frac23}}{x^{c'(\log T)^{-\frac23}(\log\log T)^{-\frac13}}}\biggr). 
    \end{align*}
\end{theorem}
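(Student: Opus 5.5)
The plan is to use the log-weighted Perron formula. Let $F(s)=\sum_{n\ge1}(-2)^{\Omega(n)}n^{-s}$. Since $(-2)^{\Omega}$ is completely multiplicative,
\[
F(s)=\prod_p\bigl(1+2p^{-s}\bigr)^{-1}=\frac{H(s)}{1+2^{1-s}},
\]
where $H(s)=\prod_{p>2}(1+2p^{-s})^{-1}$. We can also write $H(s)=G(s)\zeta(s)^{-2}$ with
\[
G(s)=\prod_{p>2}(1+2p^{-s})^{-1}(1-p^{-s})^{-2}\,(1-2^{-s})^{2}.
\]
Here $G$ is an Euler product that converges absolutely for $\sigma>1/2$, since the local factor is $1+O(p^{-2\sigma})$. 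For $c>1$ the log-weighted Perron formula is exact:
\[
S(x)=\frac{1}{2\pi i}\int_{c-i\infty}^{c+i\infty}F(s)\frac{x^s}{s^2}\,ds .
\]
The kernel $1/s^2$ makes the integral absolutely convergent, so we may truncate at height $T$. The tails $|t|>T$ on $\sigma=c=1+1/\log x$ contribute $\ll x\,|F(c)|/T\cdot(1/T)$. Because $|F(c)|\ll\log^2 x$ (the $\zeta(c)^{2}$ growth bounds $|H(c)|$, and $1+2^{1-c}$ stays away from $0$ on this line), this gives the $O(x\log^2x/T^2)$ term.

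Next shift the contour to the left, to $\sigma=1-c'(\log T)^{-2/3}(\log\log T)^{-1/3}$, which lies inside the Vinogradov--Korobov zero-free region. The poles of $F$ in the strip come only from zeros of $1+2^{1-s}$, that is $2^{1-s}=-1$. These are exactly $s=z_k=1-i(2k+1)\pi/\log2$, which lie on $\sigma=1$ and are simple. The residue at $z_k$ is
\[
\frac{H(z_k)x^{z_k}}{z_k^2}\Big/\frac{d}{ds}\bigl(1+2^{1-s}\bigr)\Big|_{z_k}.
\]
Since $\frac{d}{ds}(1+2^{1-s})=-\log2\cdot 2^{1-s}$, which equals $\log 2$ at $z_k$, each residue is $H(z_k)x^{z_k}/(\log2\,z_k^2)$. $H$ itself has no poles there, because $\zeta$ has no zeros in the region. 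Only the $z_k$ with $|\Im z_k|\le T$ are picked up. The remaining tail $k$ are completed to the full sum over $k\in\mathbb Z$ at cost $\sum_{|z_k|>T}|H(z_k)|x/|z_k|^2$. Using $|H(1+it)|\ll|\zeta(1+it)|^{-2}\ll(\log t)^{4/3}(\log\log t)^{2/3}$ (Vinogradov--Korobov), this cost is absorbed into the stated error terms; the $T^{-1}$ from summing $|z_k|^{-2}$ is no worse than the horizontal bound once the tails are organized appropriately.

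The contour errors are as follows. On the horizontal segments $t=\pm T$, for $\sigma$ between the shifted line and $c$, we use the standard bound $1/\zeta(\sigma+iT)\ll(\log T)^{2/3}(\log\log T)^{1/3}$ in the zero-free region, which gives the $(\log T)^{4/3}(\log\log T)^{2/3}$ factor for $1/\zeta^2$. We also need $|1+2^{1-s}|^{-1}\ll1$ on these segments. This requires choosing $T$ (equivalently, perturbing it by $O(1)$) so that the segment stays a fixed distance from the $z_k$. With $|x^s|\le x$ and $|s|^{-2}=T^{-2}$, the segments give the first error term. On the left vertical line, $|x^s|=x^{1-c'(\log T)^{-2/3}(\log\log T)^{-1/3}}$. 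There the integral of $|F(s)|/|s|^2$ is bounded by $(\log T)^{4/3}(\log\log T)^{2/3}$, because $1+2^{1-s}$ is bounded away from $0$ off the line $\sigma=1$ and $\int dt/|s|^2$ converges; this gives the third error term.

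I expect the main obstacle to be making the bound on $1+2^{1-s}$ uniform near the horizontal segments and near $\sigma=1$. For the horizontal segments this needs the careful choice of $T$ avoiding the $\Im z_k$ described above. On the vertical line, $|1+2^{1-s}|\ge 2^{1-\sigma}-1\asymp(1-\sigma)\log 2$, which could lose a factor $(\log T)^{2/3}(\log\log T)^{1/3}$. I would first try to show this loss is harmless, since it is absorbed into the $x^{-c'(\ldots)}$ saving after adjusting $c'$. Failing that, I would take small semicircular indentations around each $z_k$ in place of the straight vertical line.
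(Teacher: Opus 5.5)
Your architecture matches the paper's: log-weighted Perron, a shift to $\sigma=d$, residues $H(z_k)x^{z_k}/(\log 2\,z_k^2)$, and Vinogradov--Korobov bounds for $1/\zeta^2$. Your care in keeping the horizontal segments away from the $z_k$, and in tracking the factor $(1-\sigma)^{-1}$ from $1+2^{1-s}$ on $\sigma=d$, goes beyond what the paper writes.

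\textbf{The gap is the truncation step.} Bounding the tail in absolute value gives
\[
\Bigl|\frac{1}{2\pi i}\int_{|t|>T}F(c+it)\frac{x^{c+it}}{(c+it)^2}\,dt\Bigr|\le\frac{x^{c}}{2\pi}\int_{|t|>T}\frac{|F(c+it)|}{t^{2}}\,dt .
\]
Since $\int_T^\infty t^{-2}\,dt=T^{-1}$, this carries only one power of $T$, and nothing in your argument produces a second one.

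Nor is $|F(c+it)|$ controlled by $|F(c)|$ or by $\log^2x$. On $\sigma=c$ one has $|1+2^{1-c-it}|\ge 1-2^{-1/\log x}\asymp 1/\log x$, with equality at $t=(2k+1)\pi/\log 2$. So $1+2^{1-s}$ does not stay away from $0$ on that line. Also $\sum_n 2^{\Omega(n)}n^{-c}\asymp\log^3x$, since the $p=2$ factor $(1-2^{1-c})^{-1}$ is $\asymp\log x$.

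Bounded this way, the tail is $T^{-1}$ times logarithmic factors, e.g.\ $x\log x\,(\log T)^{4/3}(\log\log T)^{2/3}/T$. This is not covered by the stated error terms: with $T=(\log x)^{1+\delta}$ it exceeds $x\log^2x/T^2=x(\log x)^{-2\delta}$.

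The paper obtains $T^{-2}$ from Lemma~\ref{perron lemma}, which truncates termwise. For each $n$, closing the contour far to the right or left shows that $\frac{1}{2\pi i}\int_{\sigma_0-iT}^{\sigma_0+iT}(x/n)^s s^{-2}\,ds$ differs from $\max(\log(x/n),0)$ by $\ll (x/n)^{\sigma_0}\min\bigl(T^{-1},T^{-2}|\log(x/n)|^{-1}\bigr)$. This uses the oscillation of $(x/n)^{it}$. The resulting sum over $n$ with weights $2^{\Omega(n)}$ is then estimated by separating $n$ near $x$ from $n$ far from $x$, using $x\in\mathbb{Z}+\tfrac12$ and Grosswald's asymptotic.

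\textbf{The same lost power of $T$ affects your completion of the residue sum.} The tail $\sum_{|\mathfrak{Im}(z_k)|>T}x|H(z_k)||z_k|^{-2}$ is only $\ll x(\log T)^{4/3}(\log\log T)^{2/3}/T$. That is not ``no worse than the horizontal bound'' $x(\log T)^{4/3}(\log\log T)^{2/3}/T^2$. The paper passes from $|\mathfrak{Im}(z_k)|<T$ to $k\in\mathbb{Z}$ without comment, so it does not supply this step either.

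Since you start from the exact, absolutely convergent integral, the clean repair of both problems is not to truncate at all. Deform the whole line $\sigma=c$ onto the path given by $\sigma=d$ for $|t|\le T$, continued by the curve $\sigma=1-c'(\log|t|)^{-2/3}(\log\log|t|)^{-1/3}$ for $|t|>T$. Close with horizontal segments at heights $\pm 2\pi j/\log 2$, where $|1+2^{1-s}|=1+2^{1-\sigma}\ge 1$, and let $j\to\infty$.

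This collects every residue $z_k$ at once, giving the full sum over $k\in\mathbb{Z}$. The curve contributes $\ll x(\log T)^2\log\log T/T^2$ from $|t|>T^2$, which is $O(x\log^2x/T^2)$. It contributes a term of the third type, after shrinking $c'$, from $T<|t|\le T^2$.

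Two minor points, neither of which matters since the contour stays near $\sigma=1$. With $G=H\zeta^2$, the $p=2$ factor of $G$ is $(1-2^{-s})^{-2}$, not $(1-2^{-s})^{2}$. The Euler product for $G$ converges absolutely only for $\sigma>\log 2/\log 3$, because the $p=3$ factor has poles where $3^s=-2$.
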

The odd term Dirichlet series $H(s)$ can be written in terms of $\zeta(s)$ as
\begin{equation*}
    H(s) = \frac{G(s)}{\zeta^2(s)}.
\end{equation*}
Where $G(s)$ converges absolutely in the half-plane $\mathfrak{R}(s)> \log2/\log 3$. Using bounds on $G(s)$ and $\zeta(s)$, we can compute an upper bound for $S(x)$.
\begin{theorem}\label{main theorem}
For all sufficiently large $x$, we have the bound
\begin{equation*}
    |S(x)| \leq 1.5478x.
\end{equation*}
\end{theorem}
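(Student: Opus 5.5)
The plan is to read Theorem~\ref{main theorem} off Theorem~\ref{S(x) convergent sum}: show that all three error terms there are $o(x)$, and bound the main term explicitly by $Cx$ with $C\le 1.5478$.

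\textbf{Step 1: the error terms are $o(x)$.} Take $T=(\log x)^{1+\delta}$ with a fixed $\delta>0$.
\begin{itemize}
\item The term $x(\log T)^{4/3}(\log\log T)^{2/3}/T^2$ is clearly $o(x)$.
\item The term $x\log^2 x/T^2=x(\log x)^{-2\delta}$ is $o(x)$.
\item For the third term, write
\[
x^{c'(\log T)^{-2/3}(\log\log T)^{-1/3}}=\exp\bigl(c'\log x\,(\log T)^{-2/3}(\log\log T)^{-1/3}\bigr).
\]
Since $\log T\asymp\log\log x$, this exponential grows faster than any power of $\log\log x$. Hence it beats the factor $(\log T)^{4/3}(\log\log T)^{2/3}$, and this term is also $o(x)$.
\end{itemize}
Since $|x^{z_k}|=x$ because $\Re z_k=1$, we get
\[
|S(x)|\le \frac{x}{\log 2}\sum_{k\in\mathbb Z}\frac{|H(z_k)|}{|z_k|^2}+o(x).
\]
It therefore suffices to show that $\Sigma:=\sum_k |H(z_k)|/|z_k|^2$ satisfies $\Sigma/\log 2<1.5478$ by a small margin; that margin absorbs the $o(x)$ for large $x$.

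\textbf{Step 2: an explicit form for $H$ on the line $\Re s=1$.} The function $(-2)^{\Omega(n)}$ is completely multiplicative, so
\[
H(s)=\prod_{p>2}\bigl(1+2p^{-s}\bigr)^{-1}.
\]
We write $H(s)=G(s)/\zeta^2(s)$ with
\[
G(s)=(1-2^{-s})^{-2}\prod_{p>2}(1-p^{-s})^{-2}(1+2p^{-s})^{-1}.
\]
At $s=z_k$ we have $2^{-z_k}=-\tfrac12$, so the factor at $2$ equals $(3/2)^{-2}=4/9$ for every $k$. For odd $p$ the local factor is $1+O(p^{-2s})$ with $p=3$ the worst case, which is why $G$ converges absolutely for $\Re s>\log2/\log3$. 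On $\Re s=1$ we bound $|G(z_k)|\le G^*$ uniformly, where $G^*$ is the product over $p>2$ of the maxima over $\theta$ of $\bigl|(1-p^{-1}e^{i\theta})^{-2}(1+2p^{-1}e^{i\theta})^{-1}\bigr|$. The first few primes are handled exactly, and the tail is handled by $\prod(1+O(p^{-2}))$. Along the way one should check whether the bound $|H(z_k)|\le \prod_{p>2}(1-2/p)^{-1}$ diverges; it does, which is why the detour through $\zeta^2$ is needed.

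\textbf{Step 3: summing over $k$.} Set $t_k=(2k+1)\pi/\log 2$, so $|z_k|^2=1+t_k^2$ with $|t_k|\ge \pi/\log 2\approx 4.53$. The sum splits into two ranges.
\begin{itemize}
\item For $|k|\le K$, with $K$ a moderate cutoff of a few hundred, I would compute $|H(z_k)|$ directly, either as $|G(z_k)|/|\zeta(z_k)|^2$ with rigorous numerical evaluation of $\zeta(1+it_k)$, or from a truncated Euler product with a controlled tail.
\item For $|k|>K$, I would use an explicit bound of the form $1/|\zeta(1+it)|\le c\log|t|$ valid for $|t|\ge t_0$; such bounds exist in the literature (Trudgian, Carneiro et al.). This gives a tail at most $G^*c^2\sum_{|k|>K}\log^2 t_k/t_k^2$, which is small and explicit.
\end{itemize}
Adding the two pieces gives a numerical value of $\Sigma$, and dividing by $\log 2$ should give the constant $1.5478$.

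\textbf{Main obstacle.} The hard part is making Step 3 sharp enough. The crude bound $G^*/|\zeta|^2$ loses information through $|G|\le G^*$, so the dominant terms $k=0,-1$ (with $|z_k|^2\approx 21.5$) should be evaluated exactly rather than through $G^*$. The tail also needs a bound on $1/\zeta(1+it)$ with a good constant. I would first try exact evaluation for $|k|\le K$ combined with the explicit logarithmic bound on the tail, and increase $K$ until the total falls below $1.5478\log 2$.
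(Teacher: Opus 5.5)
Your proposal is correct and follows essentially the same route as the paper. Both arguments take the $o(x)$ error terms from Theorem~\ref{S(x) convergent sum}, bound the residue sum by $\frac{x}{\log 2}\sum_{k}|G(z_k)|\,|\zeta(z_k)|^{-2}|z_k|^{-2}$, and then combine a uniform bound on $|G(z_k)|$, direct numerical evaluation of the first terms (the paper takes $0\le k\le 54$ and doubles via $z_{-k-1}=\overline{z_k}$), and explicit bounds $1/|\zeta(1+it)|\ll\log t$ for the tail.

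The differences are minor. The paper imports $|G(z_k)|\le 0.722925$ from Johnston--Leong--Tudzi, whereas you build $G^*$ from the local maxima. Your $G^*$ is attained at $p^{-it}=-1$ and, with the factor $4/9$, comes out near $1.03$, so your intermediate constants will not match the paper's. You also evaluate the dominant terms $k=0,-1$ exactly. Carried out carefully, your computation should land well below $1.5478$ rather than on it.
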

Applying Abel's summation formula to $S(x)$ we get
\begin{equation*}
    |S(x)| = \Bigg|\int_{1}^{x}\frac{W(t)}{t}\,\text{d}t\Bigg| \leq 1.5478x.
\end{equation*}
This implies that while we do not get a point-wise bound on $W(x)$ via the bound on $S(x)$, we can show that on average $\frac{W(x)}{x}$ is bounded by a very small constant. To study the behaviour of $S(x)$, we plot $|S(x)|/x$ for integer values in the range $1 \leq x \leq 10^6$ in Figure \ref{Figure 1} below. Except for a few small values of $x$, for these integer values, the function  $S(x)/x$ is contained within the range $(-0.12,0.12)$. There are a few jump discontinuities at multiples of powers of $2$.
\begin{figure}[htp]
    \centering
    \noindent
    {\includegraphics[width=\linewidth]{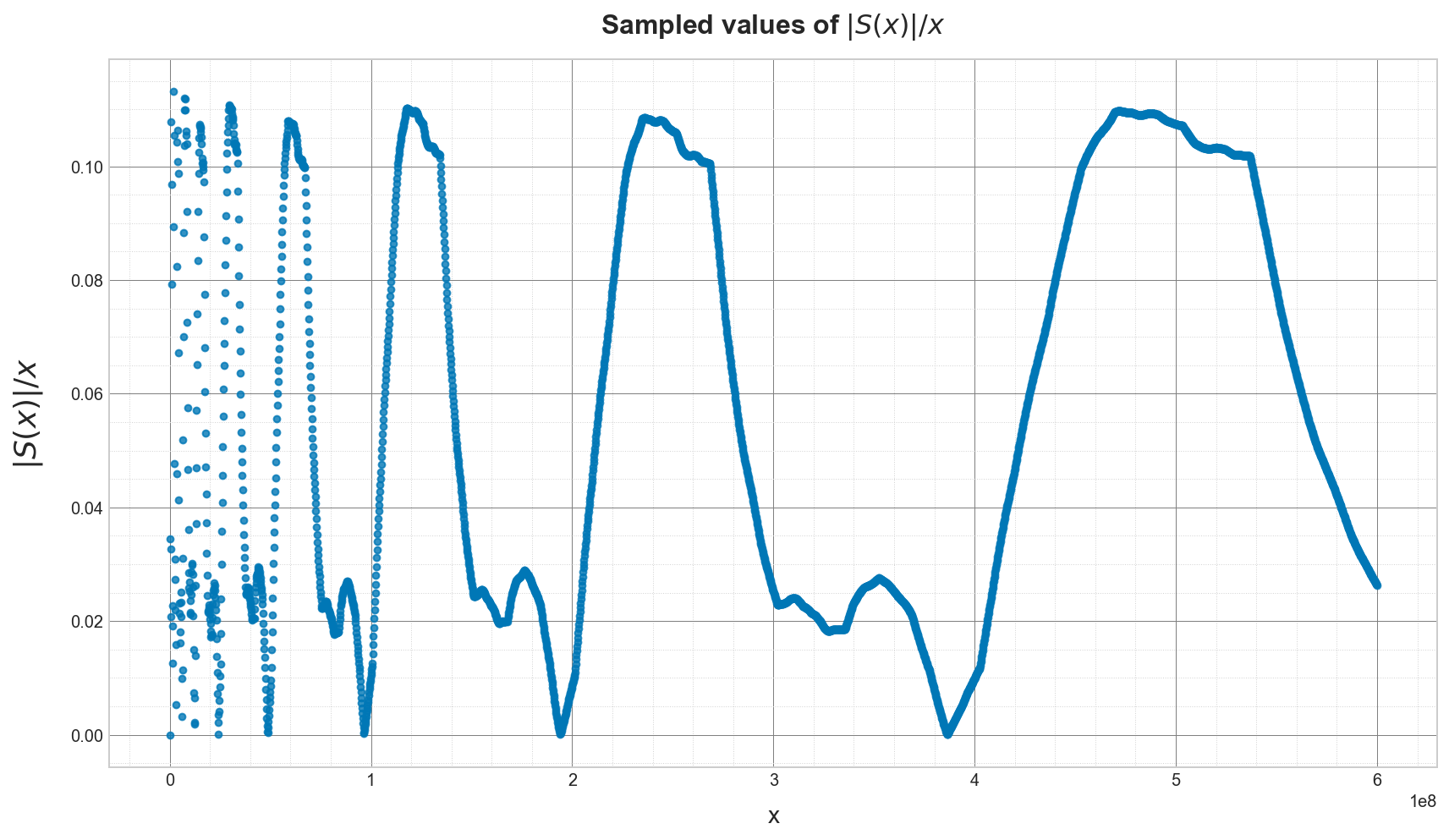}}
    \label{Figure 1}
\end{figure}

In Section \ref{perron section}, we first discuss the Dirichlet series associated with $S(x)$ and its various properties. In the following section, Section \ref{proof section}, we give the proof of Theorem \ref{S(x) convergent sum} via an application of Perron's formula and the residue theorem.
\section{Preliminary results}\label{perron section}
\subsection{Perron's formula}
The main tool used in approximating $W(x)$ is a weighted version of Perron's classical formula. Perron's formula relates the summatory function of an arithmetic function to a contour integral. The contour integral can then be evaluated using techniques from complex analysis. We use the following log-weighted Perron's formula to smooth the sum for easier analysis. 
\begin{lemma}\label{perron lemma}
    Let $f(s)$ be a Dirichlet series with $\sigma_a$ the abscissa of absolute convergence and given by
    \begin{equation*}
        f(s) = \sum_{n=1}^{\infty} \frac{a_n}{n^s}. 
    \end{equation*}
    Then for ${\sigma_{0}} > \sigma_a$, $T \geq 1$, and $x \geq 2$ with $x = N + \frac{1}{2}$ for some $N \in \mathbb{Z}$, we have
\begin{equation} \label{perron}
    \sum_{n\leq x}^{} a_n \log\Bigl(\frac{x}{n}\Bigr)= \frac{1}{2 \pi i} \int_{{\sigma_{0}} - iT}^{{\sigma_{0}} + iT} f(s) \frac{x^s}{s^2} ds + O\Biggl(\frac{1}{T}\sum_{n=1}^{\infty} {\Bigl(\frac{x}{n}\Bigr)}^{{\sigma_{0}}} |a_n| \min\Biggl(1, \frac{1}{T\big|\log \frac{x}{n}\big|}\Biggr)\Biggr).
\end{equation}
\end{lemma}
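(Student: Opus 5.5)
The plan is to follow the standard proof of the truncated Perron formula, but with the kernel $x^s/s^2$ in place of $x^s/s$. The first step is the elementary identity: for $y>0$ and $c>0$,
\begin{equation*}
    \frac{1}{2\pi i}\int_{c-i\infty}^{c+i\infty} \frac{y^s}{s^2}\,ds =
    \begin{cases} \log y, & y \geq 1,\\ 0, & 0<y<1. \end{cases}
\end{equation*}
For $y>1$ we shift the contour to the left; the only pole is the double pole at $s=0$, with residue $\log y$. For $y<1$ we shift to the right, where there are no poles. The integrand decays like $|s|^{-2}$, so the integral converges absolutely and the contour shifts are justified with no extra work.

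Next we need a truncated version with an explicit error: for $y\neq1$,
\begin{equation*}
    \frac{1}{2\pi i}\int_{c-iT}^{c+iT}\frac{y^s}{s^2}\,ds = \log y\cdot \mathbf{1}_{y>1} + O\biggl(\frac{y^{c}}{T}\min\Bigl(1,\frac{1}{T|\log y|}\Bigr)\biggr).
\end{equation*}
There are two estimates to prove.
\begin{itemize}
\item \textbf{The bound $y^c/T$.} The tails $\int_{|t|>T}$ of the full integral are bounded directly by $y^c\int_{|t|>T}dt/t^2\ll y^c/T$.
\item \textbf{The bound $y^c/(T^2|\log y|)$.} When $y>1$, close the contour by a rectangle to the left at abscissa $-R$ with $R\to\infty$. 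The left side contributes nothing in the limit. The horizontal segments at height $\pm T$ give
\begin{equation*}
    \int_{-\infty}^{c} \frac{y^\sigma}{T^2}\,d\sigma = \frac{y^c}{T^2\log y}.
\end{equation*}
When $y<1$, close to the right and get the same bound with $|\log y|$.
\end{itemize}
Taking the minimum of the two bounds gives the displayed error.

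Finally we apply this with $y=x/n$, multiply by $a_n$, and sum over $n$. Because $\sigma_0>\sigma_a$, the series $\sum |a_n| n^{-\sigma_0}$ converges, and the integrand on the segment is bounded by $|s|^{-2}x^{\sigma_0}\sum|a_n|n^{-\sigma_0}$. So by Fubini the sum and the finite integral can be interchanged, giving $\frac{1}{2\pi i}\int_{\sigma_0-iT}^{\sigma_0+iT} f(s)x^s s^{-2}\,ds$. The terms with $n\le x$ produce $\sum_{n\le x}a_n\log(x/n)$, and the accumulated error is exactly the $O$-term in \eqref{perron}.

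The hypothesis $x=N+\tfrac12$ guarantees that $x/n\neq1$ for every $n$, so $|\log(x/n)|$ never vanishes. Even without it, at $y=1$ the weight $\log y$ vanishes and the bound $y^c/T$ still holds, so this case causes no trouble.

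The main obstacle is getting the horizontal-segment estimate uniformly right when $y$ is close to $1$. The fix is simply to use the trivial bound $y^c/T$ in that range. The truncated form actually gives a better error than stated (a factor $1/T$ inside, and a $1/T$ outside), and the claimed bound is consistent with it.
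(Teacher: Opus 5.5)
Your proposal is correct and follows the same outline as the paper. Rectangular contours closed to the right for $x/n<1$, and to the left for $x/n>1$ (picking up the residue $\log(x/n)$ of the double pole at $s=0$), give the $(x/n)^{\sigma_0}/(T^2|\log(x/n)|)$ bound, and summing termwise against $a_n$ gives the lemma; both arguments tacitly need $\sigma_0>0$, which the statement omits but which holds in the application. The only real difference is the $(x/n)^{\sigma_0}/T$ bound: the paper deforms the segment onto the circular arc $|s|=(\sigma_0^2+T^2)^{1/2}$, while you subtract the tails $|t|>T$ of the absolutely convergent full-line integral, a shortcut available precisely because the kernel is $s^{-2}$ rather than $s^{-1}$.
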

\begin{proof}
    We first assume $0 < x <1$ and ${\sigma_{0}} > 0$ and estimate the integral 
    \begin{equation*}
        \int_{{\sigma_{0}}-iT}^{{\sigma_{0}}+iT} \frac{x^s}{s^2} ds.
    \end{equation*}
     Consider the counterclockwise contour $K_U$ given by the vertices ${\sigma_{0}}-iT$, ${\sigma_{0}}+iT$, $U+iT$ and $U-iT$ for some $U > 0$. Also, we take $U$ such that $U > {\sigma_{0}}$. By Cauchy's theorem,
    \begin{equation}
        \frac{1}{2 \pi i} \int_{K_U} \frac{x^s}{s^2} = 0.
    \end{equation}
    Now we can write
    \begin{equation*}
        \Bigg[\int_{{\sigma_{0}}+iT}^{{\sigma_{0}}-iT} + \int_{{\sigma_{0}}-iT}^{U-iT} +\int_{U-iT}^{U+iT} +\int_{U+iT}^{{\sigma_{0}}+iT} \Bigg] \frac{x^s}{s^2} ds = 0.
    \end{equation*}
    Rearranging we get,
    \begin{equation*}
        \int_{{\sigma_{0}}-iT}^{{\sigma_{0}}+iT} \frac{x^s}{s^2} ds = \Bigg[\int_{{\sigma_{0}}-iT}^{U-iT} +\int_{U-iT}^{U+iT} -\int_{{\sigma_{0}}+iT}^{U+iT}\Bigg] \frac{x^s}{s^2} ds.
    \end{equation*}
    We estimate the three integrals on the right-hand side of the above equation. Considering the third integral on the right, we have 
    \begin{equation*}
        \Bigg|\frac{1}{2\pi i}\int_{{\sigma_{0}}+iT}^{U+iT} \frac{x^s}{s^2} ds\Bigg| \leq \frac{1}{2\pi T^2}\int_{{\sigma_{0}}}^{U} x^{\sigma_{0}} d{\sigma_{0}} 
    \end{equation*}
    Since $0 < x <1$, as $U \to \infty$, this integral is bounded by
    \begin{equation*}
        \frac{x^{\sigma_{0}}}{2 \pi T^2|\log x|}.
    \end{equation*}
    This same bound holds for the other horizontal integral. For the vertical integral
    \begin{equation*}
        \Bigg|\frac{1}{2\pi i}\int_{U-iT}^{U+iT} \frac{x^s}{s^2} ds\Bigg| \leq \frac{x^U}{2 \pi U^2} \int_{-T}^{T} dt = \frac{x^UT}{\pi U^2}. 
    \end{equation*}
    Now, taking $U \to \infty$, as $0 < x <1$, this term goes to zero. So for $0<x<1$ and ${\sigma_{0}} > 0$
    \begin{equation}\label{0<x<1}
        \Bigg|\frac{1}{2 \pi i}\int_{{\sigma_{0}}-iT}^{{\sigma_{0}}+iT} \frac{x^s}{s^2} ds \Bigg| \leq \frac{x^{\sigma_{0}}}{2\pi T^2 |\log x|}.
    \end{equation}
    The above inequality will give us one of the two terms in the error term
    \begin{equation}\label{perron error}
        O\Biggl(\frac{1}{T}\sum_{n=1}^{\infty} {\Bigl(\frac{x}{n}\Bigr)}^{{\sigma_{0}}} |a_n| \min\Biggl(1, \frac{1}{T\big|\log \frac{x}{n}\big|}\Biggr)\Biggr),
    \end{equation}
    among which we consider the minimum. For the other term in \eqref{perron error}, we 
    need the following inequality 
    \begin{equation*}
         \Bigg|\frac{1}{2\pi i}\int_{{\sigma_{0}}-iT}^{{\sigma_{0}}+iT} \frac{x^s}{s^2} ds\Bigg| < \frac{x^{\sigma_{0}}}{T}.
    \end{equation*}
    We consider the semi-circular contour of radius $({\sigma_{0}}^2+T^2)^{\frac12}$ centred at the origin. This passes through ${\sigma_{0}} - iT$ and ${\sigma_{0}} + iT$. We can estimate the integral of the line from ${\sigma_{0}} - iT$ to ${\sigma_{0}} + iT$ by replacing it with the circular path on the right of the line segment connecting ${\sigma_{0}} - iT$ and ${\sigma_{0}} + iT$. Therefore we have
    \begin{equation}\label{error_2}
         \Bigg|\frac{1}{2\pi i}\int_{{\sigma_{0}}-iT}^{{\sigma_{0}}+iT} \frac{x^s}{s^2} ds\Bigg| \leq \frac{1}{2\pi}\pi T \cdot \frac{x^{\sigma_{0}}}{T^2} < \frac{x^{\sigma_{0}}}{T}.
    \end{equation}
    For $x>1$, we take a rectangular contour on the left of the line segment connecting ${\sigma_{0}} - iT$ and ${\sigma_{0}} + iT$ and include the residue at $s=0$, which is $\log x$. For this, we take $U$ such that $U<0<{\sigma_{0}}$, and consider the contour given by the vertices ${\sigma_{0}}-iT$, ${\sigma_{0}}+iT$, $U+iT$, and $U-iT$. The horizontal and vertical integrals of this contour turn out to be the same as in the case of $0<x<1$. Since from the two contours we get the two residues as $0$ and $\log x$, we define the piecewise function
    \begin{equation*}\label{quant perron}
            g(x) = \begin{cases}
                0 & 0<x<1\\
                \log x & x>1.
            \end{cases}
        \end{equation*}
        Then we can say from the equations \eqref{0<x<1},\eqref{error_2} and the similar analysis for $x >1$, that
        \begin{equation}\label{final_error}
           \frac{1}{2 \pi i} \int_{{\sigma_{0}}-iT}^{{\sigma_{0}}+iT} \frac{x^s}{s^2} ds = g(x) + O\Biggl(\frac{x^{{\sigma_{0}}}}{T}\min\Biggl(1, \frac{1}{T\big|\log x\big|}\Biggr)\Biggr).
        \end{equation}
    Now as in equation \eqref{quant perron} if we consider $\frac{x}{n}$ instead of $x$ in \eqref{final_error} we get 
    \begin{equation*}
        \frac{1}{2 \pi i} \int_{{\sigma_{0}}-iT}^{{\sigma_{0}}+iT} \Bigl(\frac{x}{n}\Bigr)^s\frac{1}{s^2} ds = g\Bigl(\frac{x}{n}\Bigr) + O\Biggl(\frac{1}{T}\Bigl(\frac{x}{n}\Bigr)^{\sigma_{0}}\min\Biggl(1, \frac{1}{T\big|\log \frac{x}{n}\big|}\Biggr)\Biggr).
    \end{equation*}
    Now by multiplying throughout by $\sum_{n=1}^{\infty} a_n$, followed by exchanging integral and summation we obtain
    \begin{equation*}
         \frac{1}{2 \pi i} \int_{{\sigma_{0}}-iT}^{{\sigma_{0}}+iT} f(s)\frac{x^s}{s^2} ds = \sum_{n\leq x}a_n\log \Bigl(\frac{x}{n}\Bigr) + O\Biggl(\frac{x^{\sigma_{0}}}{T}\sum_{n=1}^{\infty}\frac{|a_n|}{n^{\sigma_{0}}}\min\Biggl(1, \frac{1}{T\big|\log \frac{x}{n}\big|}\Biggr)\Biggr).
    \end{equation*}
   The result follows by rearranging. 
\end{proof}
\subsection{Preliminaries on Dirichlet series}
In this section, we study the various Dirichlet series associated with the arithmetic function ${(-2)}^{\Omega(n)}$. To evaluate the partial sums
\begin{equation*}
    \sum_{n \leq x} (-2)^{\Omega(n)} \log \Bigl(\frac{x}{n}\Bigr),
\end{equation*}
using Perron's formula, we need to find the abscissa of absolute convergence of the associated Dirichlet series. Let us denote by $J(s)$ the Dirichlet series associated with the arithmetic function ${(-2)}^{\Omega(n)}$, that is
\begin{equation*}
    J(s) = \sum_{n=1}^{\infty} \frac{{(-2)}^{\Omega(n)}}{n^s}.
\end{equation*}
Also, the odd term Dirichlet series comes up in our further calculations, so we define it as
\begin{equation*}
    H(s) = \sum_{\substack{n=1\\n \, \text{odd}}}^{\infty} \frac{(-2)^{\Omega(n)}}{n^s}.
\end{equation*}
We can write $H(s)$ as an Euler product as follows
\begin{equation*}
    H(s) = \sum_{\substack{n=1\\n \, \text{odd}}}^{\infty} \frac{(-2)^{\Omega(n)}}{n^s} = \prod_{p>2} {\biggl(1+\frac{2}{p^s}\biggr)}^{-1} = \prod_{p>2} {\biggl(1-\frac{2}{p^s}+\frac{4}{p^{2s}}-...\biggr)}.
\end{equation*}
We define $G(s)$ as follows
\begin{equation*}
    G(s) = H(s)\zeta^2(s).
\end{equation*}
We first prove the following lemma on $G(s)$.
\begin{lemma}\label{convergence}
    We define the Dirichlet series $G(s)$ by
    \begin{equation*}
        G(s) = \sum_{n=1}^{\infty} \frac{g(n)}{n^s} = H(s)\zeta^2(s).
    \end{equation*}
    Then $G(s)$ converges absolutely in the half-plane $\mathfrak{R}(s)> \log2/\log 3$.
\end{lemma}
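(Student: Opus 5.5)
The plan is to compute the Euler product of $G(s)$ prime by prime and locate the first prime power $p^k$ whose coefficient does not vanish. Since $\zeta^2(s)=\prod_p (1-p^{-s})^{-2}$ for $\mathfrak{R}(s)>1$, we have, in that half-plane,
\begin{equation*}
G(s) = (1-2^{-s})^{-2}\prod_{p>2} \frac{1}{(1+2p^{-s})(1-p^{-s})^{2}} .
\end{equation*}
The $p=2$ factor equals $\sum_{k\ge 0}(k+1)2^{-ks}$. It converges absolutely for $\mathfrak{R}(s)>0$, so it plays no role in the abscissa. All the work is in the odd primes.

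For an odd prime $p$, put $u=p^{-s}$ and expand $\bigl((1+2u)(1-u)^2\bigr)^{-1}$ as a power series. Since $(1+2u)(1-u)^2 = 1-3u^2+2u^3$, the linear term cancels:
\begin{equation*}
\frac{1}{1-3u^2+2u^3} = 1 + 3u^2 - 2u^3 + \cdots .
\end{equation*}
Partial fractions give the exact coefficients. Writing the function as $\frac{A}{1+2u}+\frac{B}{1-u}+\frac{C}{(1-u)^2}$, the coefficient of $u^k$ is $c_k = A(-2)^k + B + C(k+1)$, so $|c_k|\le C_0 2^k$ for an absolute constant $C_0$. Hence $g$ is multiplicative with $g(p)=0$ for every odd $p$ and $|g(p^k)|\le C_0 2^k$, with $g(p^k)=c_k$ for odd $p$.

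For absolute convergence, it suffices that $\prod_{p}\bigl(1+\sum_{k\ge 2}|g(p^k)|p^{-k\sigma}\bigr)$ converges, with $\sigma=\mathfrak{R}(s)$. For odd $p$, the tail sum $\sum_{k\ge2}|c_k|p^{-k\sigma}$ converges only if $2p^{-\sigma}<1$, that is, $\sigma>\log 2/\log p$. The worst prime is $p=3$, which gives exactly the threshold $\sigma>\log2/\log3$. For $p\ge5$ the condition $\sigma>\log 2/\log p$ is weaker, so $\log 2/\log 3$ is the binding constraint.

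Under $\sigma>\log2/\log3$ we have $2p^{-\sigma}\le 2\cdot3^{-\sigma}<1$ for all odd $p$. Summing the geometric series starting at $k=2$ gives
\begin{equation*}
\sum_{k\ge2}|c_k|p^{-k\sigma}\le C_0\,\frac{4p^{-2\sigma}}{1-2\cdot3^{-\sigma}} ,
\end{equation*}
which is $\ll_\sigma p^{-2\sigma}$. Because $\log2/\log3>1/2$, we get $2\sigma>1$, so $\sum_p p^{-2\sigma}<\infty$ and the product converges. Then $\sum_n |g(n)|n^{-\sigma}$ is bounded by this convergent product, since $|g|$ is multiplicative.

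The main obstacle is the step forcing $\sigma>1/2$ jointly with the growth $2^k$: one must check that $(\log 2/\log 3)\cdot 2>1$, which holds since $4>3$. This is precisely why the cancellation $g(p)=0$, coming from the factor $\zeta^2$, matters: without it the sum $\sum_p 2p^{-\sigma}$ would force $\sigma>1$.

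Finally, the identity $G=H\zeta^2$ as Dirichlet series is justified by Dirichlet convolution of absolutely convergent series for $\mathfrak{R}(s)>1$. Since the coefficients $g(n)$ are determined formally, the absolute convergence proved above extends $G$ as an absolutely convergent Dirichlet series to the half-plane $\mathfrak{R}(s)>\log2/\log3$.
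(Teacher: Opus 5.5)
Your proof is correct and is essentially the paper's argument: you split off the harmless $p=2$ factor, note that for odd $p$ the local factor $1/\bigl((1+2u)(1-u)^2\bigr)$ has zero linear coefficient and coefficients of size $O(2^k)$, sum the geometric tail from $k=2$ (which needs $p^{\sigma}>2$, binding at $p=3$) to get $\ll_\sigma p^{-2\sigma}$, and conclude from $2\sigma>1$. The differences are cosmetic: you get the coefficient bound from the exact partial-fraction formula $c_k=\tfrac49(-2)^k+\tfrac29+\tfrac13(k+1)$ rather than the paper's alternating-series estimate $|a(k)|\le 3\cdot 2^{k-2}$, and you also observe that the threshold $\log 2/\log 3$ is sharp because of the prime powers $3^k$.
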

\begin{proof}
Since $g(n)$ is a convolution of multiplicative functions, it is also multiplicative. Therefore, we have
\begin{equation*}
    \sum_{n=1}^{\infty} \frac{|g(n)|}{n^{\sigma_{0}}} = \prod_{p} \biggl(1+ \frac{|g(p)|}{p^{\sigma_{0}}}+\frac{|g(p^2)|}{p^{2{\sigma_{0}}}}+...\biggr),
\end{equation*}
where $\mathfrak{R}(s) = \sigma_0$.
Now we have
\begin{equation*}
    H(s) = \prod_{p>2} \Biggl(\sum_{k=0}^{\infty} \biggl(\frac{-2}{p^s}\biggr)^{k}\Biggr).
\end{equation*}
Also for $\zeta^2(s)$ we can write
\begin{equation*}
    \zeta^2(s)= (1-2^{1-s}+2^{-2s})^{-1} \prod_{p>2} \sum_{k=0}^{\infty} \biggl(\frac{k+1}{p^s}\biggr)^{k}.
    \end{equation*}
Let
\begin{equation}
     G(s) = (1-2^{1-s}+2^{-2s})^{-1}\prod_{p > 2} \sum_{k = 0}^{\infty} \frac{a(k)}{p^{ks}},
\end{equation}
where
\begin{equation}
     a(k) = \sum_{j = 0}^{k} (-1)^{k-j} 2^{k-j} (j + 1).
\end{equation}
Now,
\begin{equation}\label{ak} 
|a(k)| \leq 2^k \left| \sum_{j = 0}^{k} \frac{(-1)^j (j + 1)}{2^j} \right|.
\end{equation}
Here, the sum
\[ 
A_k := \sum_{j = 0}^{k} \frac{(-1)^j (j + 1)}{2^j}
\]
is an alternating series in which the absolute value of the summand, $(j+1)/2^j$, decreases toward zero. Therefore, $A_k$ is bounded above by its even partial sums, and bounded below by its odd partial sums. In particular, for all $k \geq 2$,
\[
0 = A_1 \leq A_k \leq A_2 = \frac{3}{4}.
\]
Substituting this into \eqref{ak} gives
\begin{equation}
|a(k)| \leq 3 \cdot 2^{k - 2}
\end{equation}
for all $k \geq 2$.\\
Now, $G(s)$ is absolutely convergent if and only if
\begin{equation} \label{prod}
\sum_{p > 2} \sum_{k = 0}^{\infty} \frac{a(k)}{p^{ks}}
\end{equation}
is convergent. So, in what follows we fix ${\sigma_{0}} = \Re(s) > \log 2 / \log 3$ and use the comparison test on \eqref{prod}.
By \eqref{ak} and the fact that $a(0)=0$ and $a(1) = 0$, we have, for all $p > 2$,
\begin{equation}\label{prodsum}
\left| \sum_{k = 0}^{\infty} \frac{a(k)}{p^{ks}} \right| 
\leq \frac{3}{4} \sum_{k = 2}^{\infty} \frac{2^k}{p^{k{\sigma_{0}}}}.
\end{equation}

The right-hand side of \eqref{prodsum} converges as a geometric sum to \( \frac{3}{p^{2{\sigma_{0}}} - 2p^{\sigma_{0}}} \), provided \( p^{\sigma_{0}} > 2 \). Such a condition holds since \( {\sigma_{0}} > \log 2 / \log 3 \).
Thus
\begin{equation}\label{GP}
\left| \sum_{k = 1}^{\infty} \frac{a(k)}{p^{ks}} \right| 
\leq \sum_{p > 2} \frac{3}{p^{2{\sigma_{0}}} - 2p^{\sigma_{0}}}
\leq \sum_{p > 2} \frac{C({\sigma_{0}})}{p^{2{\sigma_{0}}}}
\end{equation}
for some constant \( C({\sigma_{0}}) > 0 \). Now, the right-most sum of \eqref{GP} converges since \( {\sigma_{0}} > 1/2 \). So by the comparison test, \( G(s) \) is absolutely convergent, as required.
\end{proof}
To compute the integral in Perron's formula, which is
\begin{equation*}
    \frac{1}{2 \pi i} \int_{{\sigma_{0}} - iT}^{{\sigma_{0}} + iT} J(s) \frac{x^s}{s} \text{d}s,
\end{equation*}
we need to consider a suitable contour and apply Cauchy's residue theorem. For this, we find the singularities of $J(s)$ to the left of the line $\mathfrak{R}(s)={\sigma_{0}}$. Since $J(s)$ is convergent in the half-plane $Re(s)>1$, we can take ${\sigma_{0}}>1$. We take ${\sigma_{0}} = 1+\frac{1}{\log x}$. We prove the following lemma on the singularities of $J(s)$.
\begin{lemma}\label{res}
    Define the Dirichlet series $J(s)$ as
    \begin{equation*}
        J(s) = \sum_{n=1}^{\infty} \frac{{(-2)}^{\Omega(n)}}{n^s}
    \end{equation*}
    for $\mathfrak{R}(s)>1$. Let
    \begin{equation*}
        \sigma_0 = 1 + \frac{1}{\log x}.
    \end{equation*}
    The singularities of $J(s)$ to the left of the vertical line connecting ${\sigma_{0}} + iT$ and ${\sigma_{0}} -iT$ are given by
    \begin{equation*}
        z_k = 1 - \frac{i(2k+1)\pi}{\log 2}.
    \end{equation*}
    Additionally, we have the residue
    \begin{equation*}
        \Res_{s=z_k} \frac{J(s)x^s}{s^2} = \frac{H(z_k)x^{z_k}}{{z_k}^2}
    \end{equation*}
    at $s=z_k$ where
    \begin{equation*}
        H(s) = \sum_{\substack{n=1\\ n \hspace{2 pt} \text{odd}}}^{\infty} \frac{(-2)^{\Omega(n)}}{n^s}.
    \end{equation*}
\end{lemma}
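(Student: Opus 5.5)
The plan is to factor off the contribution of the prime $2$ from the Euler product of $J(s)$. Since $n\mapsto(-2)^{\Omega(n)}$ is completely multiplicative, for $\mathfrak{R}(s)>1$ we have
\begin{equation*}
    J(s)=\prod_{p}\Bigl(1+\frac{2}{p^s}\Bigr)^{-1}=\Bigl(1+\frac{2}{2^{s}}\Bigr)^{-1}\prod_{p>2}\Bigl(1+\frac{2}{p^s}\Bigr)^{-1}=\frac{H(s)}{1+2^{1-s}}.
\end{equation*}
This identity gives the meromorphic continuation of $J(s)$ to the left of $\mathfrak{R}(s)=\sigma_0$, as far as $H(s)$ continues. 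The singularities of $J(s)$ then come from two sources: the zeros of $1+2^{1-s}$, and the singularities of $H(s)$.

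\emph{Zeros of $1+2^{1-s}$.} These satisfy $2^{1-s}=-1$, that is, $(1-s)\log 2=i(2k+1)\pi$ for $k\in\mathbb{Z}$. This gives exactly $s=z_k=1-\frac{i(2k+1)\pi}{\log 2}$. All of them lie on the line $\mathfrak{R}(s)=1$, hence strictly to the left of the line $\mathfrak{R}(s)=\sigma_0=1+1/\log x$. Each is a simple zero, because
\begin{equation*}
    \frac{d}{ds}\bigl(1+2^{1-s}\bigr)=-\log 2\cdot 2^{1-s},
\end{equation*}
which equals $\log 2\neq 0$ at $s=z_k$.

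\emph{Behaviour of $H(s)$.} Next I show that $H$ is holomorphic in the region the contour will sweep. By Lemma \ref{convergence}, $H(s)=G(s)/\zeta^2(s)$, where $G$ is holomorphic for $\mathfrak{R}(s)>\log 2/\log 3$. The function $\zeta(s)$ has no zeros on $\mathfrak{R}(s)=1$ (classical), and none in the standard zero-free region to its left. The double pole of $\zeta$ at $s=1$ makes $1/\zeta^2$ vanish there, so $H$ has a removable singularity at $s=1$. Hence $H$ is holomorphic on a neighbourhood of the closed half-plane $\mathfrak{R}(s)\ge 1$, and also throughout the zero-free region used for the contour. The remaining factor $x^s/s^2$ has its only pole at $s=0$, which lies outside that region. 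So within the region bounded by the contour, the only singularities of $J(s)x^s/s^2$ are the simple poles at the $z_k$ (with $|\mathfrak{I}(z_k)|\le T$).

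\emph{Residue and main obstacle.} For a simple pole of the form $\phi(s)/\psi(s)$ with $\psi(z_k)=0\neq\psi'(z_k)$, the residue is $\phi(z_k)/\psi'(z_k)$. This gives
\begin{equation*}
    \Res_{s=z_k}\frac{J(s)x^s}{s^2}=\frac{H(z_k)x^{z_k}}{z_k^2\,\psi'(z_k)}=\frac{H(z_k)x^{z_k}}{z_k^2\log 2}.
\end{equation*}
This carries the factor $1/\log 2$ that appears in Theorem \ref{S(x) convergent sum}; the residue formula in the statement should be read with that normalising constant. The main point needing care is not the residue computation, which is routine, but the claim that no other singularities appear to the left of the line. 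That claim is only true within the region swept by the contour, which must stay inside the zero-free region of $\zeta$. Further left, the zeros of $\zeta$ produce poles of $H$. I would therefore phrase the lemma's conclusion relative to the zero-free region used in the proof of Theorem \ref{S(x) convergent sum}.
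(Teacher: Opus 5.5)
Your proposal is correct and follows the paper's route: split off the $p=2$ Euler factor, locate the simple zeros of $1+2^{1-s}$ on $\mathfrak{R}(s)=1$, and get the residue by differentiating that factor. Both of your caveats are also right. The paper's own limit computation gives $\frac{H(z_k)x^{z_k}}{-z_k^2\,2^{1-z_k}\log 2}=\frac{H(z_k)x^{z_k}}{z_k^2\log 2}$ and then drops the $\log 2$ in its last line; the factor $\frac{1}{\log 2}$ reappears in Theorem \ref{S(x) convergent sum}. The paper also discards the other Euler-factor zeros $\frac{\log 2-i(2k+1)\pi}{\log p}$ with $p\ge 3$, and the zeros of $\zeta$, without argument. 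Your use of $H=G/\zeta^2$, Lemma \ref{convergence} and the zero-free region, together with the removable singularity at $s=1$, supplies that missing step and correctly restricts the claim to the region enclosed by the contour.
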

\begin{proof}
    We write $J(s)$ in Euler product form as
\begin{equation*}
    J(s) = \sum_{n=1}^{\infty} \frac{{(-2)}^{\Omega(n)}}{n^s} = \prod_{p}^{} \biggl(1+ \frac{2}{p^s} \biggr)^{-1}
\end{equation*}
From this Euler product, we can compute the singularities of $J(s)$ to the left of the line $\mathfrak{R}(s)={\sigma_{0}}$. Since ${\sigma_{0}}=1+\frac{1}{\log x}$, we consider the singularities of $J(s)$ having $\mathfrak{R}(s)=1$. Setting $\Bigl(1+ \frac{2}{p^s}\Bigr)$ to 0, we get
\begin{equation*}
    s = \frac{\log 2-i(2k+1)\pi}{\log p}
\end{equation*}
Since we are interested in the singularities to the left of the line $\mathfrak{R}(s)={\sigma_{0}}$, we only consider the singularities
\begin{equation*}
    z_k = 1 - \frac{i(2k+1)\pi}{\log 2}.
\end{equation*}
Now we compute the residue at these singularities.
\begin{align}
\lim_{z \to z_k} \frac{(z-z_k)}{{(1+2^{1-z})}} \prod_{p>2} {\biggl(1+\frac{2}{p^z}\biggr)}^{-1}\frac{x^z}{z^2}\nonumber &= \lim_{z \to z_k} \frac{1}{{(-2^{1-z}\log 2)}} \prod_{p>2} {\biggl(1+\frac{2}{p^z}\biggr)}^{-1}\frac{x^z}{z^2}\nonumber\\
&= \frac{H(z_k)x^{z_k}}{-{z_k}^2 2^{1-z_k}\log2}
\end{align}
Now, in the denominator we observe that,
\begin{align*}
   1-z_k &= 1-\biggl(1-\frac{(2k+1)\pi i}{\log2}\biggr)\\
   &= \frac{(2k+1)\pi i}{\log2}.
\end{align*}
Therefore,
\begin{align*}
    2^{1-z_k} &= {\Bigl(2^{\frac{1}{\log2}}\Bigr)}^{(2k+1)\pi i}\\
    &= e^{(2k+1)\pi i}\\
    &=-1.
\end{align*}
So we can rewrite the residue as
\begin{equation*}
      \Res_{s=z_k} J(s) = \frac{H(z_k)x^{z_k}}{{z_k}^2}
\end{equation*}
\end{proof}
Since we can write the odd term Dirichlet series $H(s)$ in terms of the reciprocal of $\zeta(s)$, it is important to note the zero free regions of $\zeta(s)$ so that we take the contour integral in a region which does not contain any zeros of $\zeta(s)$. The methods of Korobov \cite{Kor58} and Vinogradov \cite{Vin58} produce a zero free region for the Riemann zeta function $\zeta(s)$ of the following strength.
\begin{lemma} \label{VK ZFR}
For some constant $c > 0$, there are no zeros of $\zeta(s)$ for $s = \beta + it$ with $|t|$ large and
\[
1 - \beta \leq \frac{c}{(\log |t|)^{2/3} \, (\log \log|t|)^{1/3}}.
\]
\end{lemma}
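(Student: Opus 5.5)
This is the classical Vinogradov--Korobov zero-free region, and we would not reprove it from first principles in the paper but cite \cite{Kor58,Vin58}. The sketch below is the standard route. It has two ingredients: a growth bound for $\zeta(s)$ just to the left of $\Re(s)=1$, and a ``growth bound implies zero-free region'' principle of Landau type, combined through the classical $3$--$4$--$1$ positivity argument.

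\textbf{Step 1 (growth bound).} We would bound $\zeta(\sigma+it)$ for $\sigma$ near $1$ and large $|t|$. The approach is to approximate $\zeta(\sigma+it)$ by $\sum_{n\le |t|} n^{-\sigma-it}$ and split this into dyadic blocks $\sum_{N<n\le 2N} n^{-it}$. For small $N$ the blocks are bounded trivially. For $N$ in the range where $\log N \asymp \log|t|/r$, we would apply the Vinogradov--Korobov estimate for Weyl sums. That estimate comes from Taylor-expanding $t\log(n+h)$ into a polynomial in $h$ of degree about $r$ and invoking the Vinogradov mean value theorem; it gives the saving $\sum_{N<n\le2N} n^{-it} \ll N^{1-c_1/r^2}$. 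Summing the dyadic blocks with partial summation should give the standard bound
\[
|\zeta(\sigma+it)| \ll |t|^{B(1-\sigma)^{3/2}} (\log|t|)^{2/3}, \qquad \tfrac12\le\sigma\le1,
\]
for some absolute $B>0$. Choosing $r$ optimally as a function of $\log N/\log|t|$ is what produces the exponent $3/2$.

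\textbf{Step 2 (from growth to zeros).} We would use Landau's lemma, which rests on Borel--Carath\'eodory and Jensen. Suppose $|\zeta(s)|\le e^{\phi(t)}$ in the region $1-\theta(t)\le\sigma$, with $\phi$ increasing and $\theta$ decreasing. Then there are no zeros with
\[
\sigma\ge 1-c_2\,\theta(2t+1)/\phi(2t+1).
\]
Concretely, one bounds $-\Re\,\zeta'/\zeta(s)$ from above near $1+it$ by a sum over nearby zeros plus $O(\phi/\theta)$, and then inserts this into
\[
3\frac{-\zeta'}{\zeta}(\sigma)+4\Re\frac{-\zeta'}{\zeta}(\sigma+it)+\Re\frac{-\zeta'}{\zeta}(\sigma+2it)\ge0.
\]

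\textbf{Step 3 (optimisation).} We would take $\theta=(\log\log|t|/\log|t|)^{2/3}$. Step 1 then gives $\phi\asymp B\theta^{3/2}\log|t|+\log\log|t|\asymp\log\log|t|$. The ratio $\theta/\phi$ is
\[
\asymp(\log|t|)^{-2/3}(\log\log|t|)^{-1/3},
\]
which is exactly the stated region.

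\textbf{Main obstacle.} The hard step is Step 1, namely the Vinogradov mean value estimate and its transfer to the exponential sums $\sum n^{-it}$. This is deep but standard, so in the paper we would simply quote it; Steps 2 and 3 are routine.
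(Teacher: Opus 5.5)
Your proposal is correct and agrees with the paper, which gives no argument of its own and simply cites Korobov and Vinogradov. Your sketch is the standard route: the growth bound $\zeta(\sigma+it)\ll|t|^{B(1-\sigma)^{3/2}}(\log|t|)^{2/3}$ is fed into Landau's lemma with $\theta=(\log\log|t|/\log|t|)^{2/3}$ and $\phi\asymp\log\log|t|$. One minor correction: the exponent $3/2$ comes from maximising $(1-\sigma)\log N-c_1(\log N)^3/(\log|t|)^2$ over the block size $N$, not from the choice of $r$. You also rightly place the key input in an upper bound for $\zeta(s)$ itself; the paper's remark that the $1/\zeta(s)$ bound of Lemma~\ref{zeta} is the ``essential ingredient'' reverses the logic, since that reciprocal bound is deduced from the zero-free region.
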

The essential ingredient for this Lemma \ref{VK ZFR} is the following upper bound on $\frac{1}{\zeta(s)}$.
\begin{lemma}\label{zeta}
Let $s = {\sigma_{0}} + it$ and given $1-c'(\log t)^{-\frac{1}{3}}(\log\log t)^{-\frac{2}{3}} \leq {\sigma_{0}}$ for some absolute constant $c'$,
\begin{equation}
    \frac{1}{\zeta(s)} \ll (\log t)^{\frac{2}{3}}(\log \log t)^{\frac{1}{3}}.
\end{equation}
\end{lemma}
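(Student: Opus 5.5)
The plan is the standard route from a zero-free region to a bound for $1/\zeta$ in the style of Titchmarsh, Theorem 3.11. The region is always the printed one,
\[
\sigma_0 \geq 1-\eta(t), \qquad \eta(t):=c'(\log t)^{-\frac13}(\log\log t)^{-\frac23}.
\]
There is a serious caveat, stated up front. For large $t$ we have $\eta(t)$ much larger than $(\log t)^{-\frac23}(\log\log t)^{-\frac13}$, so the printed region extends further left than the zero-free region of Lemma \ref{VK ZFR}. The argument below therefore works only if one first has a zero-free region of width $\asymp\eta(t)$. I do not know of such a result, and I treat it as the central obstacle rather than as a routine input.

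\emph{Step 1 (growth of $\zeta$).} Use the Vinogradov--Korobov (Richert) bound $|\zeta(\sigma+it)|\ll t^{B(1-\sigma)^{3/2}}(\log t)^{2/3}$ for $\frac12\le\sigma\le 1$. Throughout a disc of radius $r\asymp\eta(t)$ centred near $1+\eta(t)+it$, this gives
\[
\log|\zeta(s)|\ll \eta(t)^{3/2}\log t+\log\log t.
\]
For the printed $\eta$ we have $\eta^{3/2}\log t=c'^{3/2}(\log t)^{1/2}(\log\log t)^{-1}$. This term is larger than what the VK-width argument produces, so the final exponent will come out differently unless it is controlled. I would try to absorb it by shrinking the radius, at the cost of a larger constant in the final power of $\log t$.

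\emph{Step 2 (Borel--Carath\'eodory and three circles).} Assume $\zeta$ has no zeros in a neighbourhood of width $2\eta(t)$ to the left of $\Re s=1$; this is the missing zero-free region. Then $\log\zeta$ is holomorphic on the disc. Its real part is bounded above by Step 1, and at the centre it is $O(\log(1/\eta))$. Borel--Carath\'eodory then bounds $|\log\zeta(s)|$ on a smaller concentric disc. Hadamard's three-circle theorem, interpolating between that bound and the trivial bound $|\log\zeta(\sigma+it)|\le\log\zeta(\sigma)\ll\log\frac{1}{\sigma-1}$ for $\sigma>1$, gives
\[
-\log|\zeta(s)|\le \tfrac23\log\log t+\tfrac13\log\log\log t+O(1)
\]
for $\sigma\ge1-\eta(t)$, after choosing $c'$ small. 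Exponentiating gives $1/\zeta(s)\ll(\log t)^{\frac23}(\log\log t)^{\frac13}$. For $\sigma\ge 1+\eta$ the bound is immediate from $|1/\zeta(s)|\le\zeta(\sigma)\ll 1/(\sigma-1)\le 1/\eta$, which is compatible with the target once $1/\eta$ is compared with $(\log t)^{2/3}$.

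\emph{Main obstacle.} Step 2 needs the zero-free region of width $\eta(t)$. Lemma \ref{VK ZFR} supplies only width $(\log t)^{-\frac23}(\log\log t)^{-\frac13}$. Moreover, the bound on the far right, $1/\zeta\ll 1/\eta=(\log t)^{1/3}(\log\log t)^{2/3}$, fits the claimed exponent only by coincidence of the swapped exponents. I would first try to make the argument close using only Lemma \ref{VK ZFR}, checking at each stage whether the printed width is ever genuinely needed. If it is, the honest conclusion is that the lemma as printed requires a zero-free region strictly stronger than Vinogradov--Korobov. In that case the statement should be read with the exponents $\frac23,\frac13$ in $\eta$ interchanged, which is exactly the region of Lemma \ref{VK ZFR}, and for that region Steps 1--2 go through verbatim.
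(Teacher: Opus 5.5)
The paper gives no proof of this lemma. It is quoted as the classical consequence of the Vinogradov--Korobov method (Titchmarsh, \S 6.19, resting on Theorems 3.10--3.11). Your diagnosis of the printed region is correct, and it can be settled without stage-by-stage checking. A finite bound for $1/\zeta(s)$ on a region forces $\zeta(s)\neq 0$ there. So the printed statement would itself give the zero-free region $\sigma\ge 1-c'(\log t)^{-1/3}(\log\log t)^{-2/3}$, which is far beyond anything known. The exponents are swapped, and the paper's own contour, with $d=1-c'(\log T)^{-2/3}(\log\log T)^{-1/3}$, uses the correct version.

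The genuine gap is your claim that Step 2 goes through verbatim for the corrected width $\eta=c'(\log t)^{-2/3}(\log\log t)^{-1/3}$. On a disc of radius $\asymp\eta$ the only available upper bound for $\Re\log\zeta$ is $\asymp\log\log t$. Borel--Carath\'eodory therefore gives only $|\log\zeta|\ll\log\log t$ and $\zeta'/\zeta\ll(\log\log t)/\eta$. Three circles does not rescue this. In $M(r_2)\le M(r_1)^{1-\alpha}M(r_3)^{\alpha}$, the inner circle must lie in $\sigma>1$ with $M(r_1)\le\log(1/\eta)+O(1)$, which forces it to stay a distance $\gg\eta$ from the $1$-line. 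The outer circle must lie inside the zero-free disc, so its radius exceeds $r_2$ by only $O(\eta)$. Consequently $\alpha=\log(r_2/r_1)/\log(r_3/r_1)$ is bounded below by a positive constant, and the Borel--Carath\'eodory bound for $M(r_3)$ exceeds $M(r_1)$ by a constant factor larger than $1$. You therefore lose a constant factor in the exponent and get only $1/\zeta\ll(\log t)^{A}$ for some $A>2/3$. You do not get $(\log t)^{2/3}(\log\log t)^{1/3}$.

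To reach $\log|1/\zeta(s)|\le\log(1/\eta)+O(1)$ you need $\zeta'/\zeta\ll 1/\eta$ on a horizontal segment of length $O(\eta)$. The missing device is the Landau-type lemma behind Titchmarsh's Theorem 3.11:
\begin{itemize}
\item Work on a disc of radius $\theta=(\log\log t)^{2/3}(\log t)^{-2/3}$. This is the width on which Richert's bound gives $\zeta\ll(\log t)^{B}$. It exceeds the zero-free width by a factor $\log\log t$, so the disc may contain zeros.
\item Jensen's formula and Borel--Carath\'eodory give $\zeta'/\zeta(s)=\sum_\rho(s-\rho)^{-1}+O((\log\log t)/\theta)$, the sum running over the $O(\log\log t)$ nearby zeros.
\item For $s$ in the zero-free region, $\Re(s-\rho)^{-1}>0$, hence $-\Re\,\zeta'/\zeta(s)\ll 1/\eta$.
\item Apply Borel--Carath\'eodory to $-\zeta'/\zeta$ itself, not to $\log\zeta$, on a disc of radius $\asymp\eta$ centred at $1+c\eta+it$, where $|\zeta'/\zeta|\le 1/(c\eta)+O(1)$. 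This gives $\zeta'/\zeta\ll 1/\eta$ for $\sigma\ge 1-c''\eta$.
\item Integrate from $1+\eta+it$ to get $\log|1/\zeta(s)|\le\log\zeta(1+\eta)+O(1)=\log(1/\eta)+O(1)$, which is exactly the claimed bound.
\end{itemize}
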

\subsection{Bounds for $G(s)$ and $\frac{1}{\zeta^2(s)}$}\label{G(s)}
While applying the log-weighted Perron formula Lemma \ref{perron}, we need to evaluate the contour integral on the line connecting ${\sigma_{0}} + iT$ and ${\sigma_{0}} -iT$. To do this, we use Cauchy's residue theorem. The resulting residue term contains the odd term Dirichlet series $H(s)$ given by
\begin{equation*}
    H(s) = \sum_{\substack{n=1\\n \, \text{odd}}}^{\infty} \frac{(-2)^{\Omega(n)}}{n^s} = \frac{G(s)}{\zeta^2(s)}.
\end{equation*}
This is evaluated at the singularities of $J(s)$. We recall that the Dirichlet series
    \begin{equation*}
        J(s) = \sum_{n=1}^{\infty} \frac{{(-2)}^{\Omega(n)}}{n^s},
    \end{equation*}
    has singularities to the left of the $1$-line which are given by
    $$z_k = 1-\frac{(2k+1)\pi i}{\log 2}.$$
    We can estimate $|G(z_k)|$ by computation using the Euler product of $G(s)$. For $\Big|\frac{1}{\zeta^2(z_k)}\Big|$ we apply an existing bound on $\frac{1}{\zeta(s)}$ on the $1$-line. We discuss these bounds below. In \cite{Johnston2024NewBA}, Proposition 2.3, we get the following bound on $G(z_k)$.
\begin{lemma}\label{G}
    We have
    \begin{equation*}
        |G(z_k)| \leq 0.722925.
    \end{equation*}
\end{lemma}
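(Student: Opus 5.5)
The plan is to bound $|G(z_k)|$ directly from its Euler product, using the special value of $2^{-z_k}$ and a uniform bound over the unknown phases $p^{-it_k}$, where $t_k = -(2k+1)\pi/\log 2$. Since $G(s)=H(s)\zeta^2(s)$ and both factors have Euler products for $\Re(s)>1$, I would write
\begin{equation*}
G(s) = E_2(s)\prod_{p>2} F_p(s), \qquad F_p(s) = \Bigl(1+\frac{2}{p^s}\Bigr)^{-1}\Bigl(1-\frac{1}{p^s}\Bigr)^{-2} = \sum_{k\ge 0}\frac{a(k)}{p^{ks}},
\end{equation*}
where $E_2(s)$ is the local factor at $2$, coming from $\zeta^2$ alone because $H$ has no $2$-part. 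By Lemma \ref{convergence} the product converges absolutely on $\Re(s)=1$, so evaluating at $z_k$ is legitimate.

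\textbf{Step 1: the factor at $2$.} As in the proof of Lemma \ref{res}, $2^{1-z_k}=-1$, so $2^{-z_k}=-\tfrac12$. Hence $E_2(z_k)$ is an explicit constant independent of $k$; for instance $(1-2^{-s})^{-2}$ gives $4/9$.

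\textbf{Step 2: small odd primes.} For odd $p$ we have $p^{-z_k}=u_p/p$ with $|u_p|=1$. The phases $u_p = p^{-it_k}$ vary with $k$ and, by Kronecker's theorem, can be essentially arbitrary and jointly independent. So the honest uniform bound is
\begin{equation*}
|F_p(z_k)| \le M_p := \max_{|u|=1}\Bigl|1+\frac{2u}{p}\Bigr|^{-1}\Bigl|1-\frac{u}{p}\Bigr|^{-2}.
\end{equation*}
For each prime $3\le p\le P$, with $P$ a cutoff of a few thousand, I would compute $M_p$ numerically. This is a one-variable maximisation of a trigonometric rational function, and I expect the maximum to occur at or near $u=-1$, which I would verify by checking the derivative in $\theta$.

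\textbf{Step 3: the tail.} For $p>P$, I would use $a(0)=1$, $a(1)=0$ and $|a(k)|\le 3\cdot 2^{k-2}$ from the proof of Lemma \ref{convergence}. This gives
\begin{equation*}
|F_p(z_k)| \le 1+\sum_{k\ge2}\frac{3\cdot 2^{k-2}}{p^k} = 1+\frac{3}{p(p-2)}.
\end{equation*}
Then
\begin{equation*}
\log\prod_{p>P}|F_p(z_k)| \le \sum_{p>P}\frac{3}{p(p-2)},
\end{equation*}
and I would bound this sum explicitly by partial summation with an explicit Chebyshev-type bound for $\pi(x)$; it is roughly $3/(P\log P)$.

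\textbf{Assembly and main obstacle.} Multiplying the constant from Step 1, the computed $M_p$ for $p\le P$, and the tail bound from Step 3 gives an upper bound for $|G(z_k)|$ that is uniform in $k$. The main obstacle is numerical sharpness. The prime $3$ is the dominant factor: its maximum near $u=-1$ is about $27/16$. With the factor at $2$ taken as $4/9$, the crude product already slightly exceeds $0.72$, so reaching the stated constant $0.722925$ depends delicately on the exact form of the local factor at $2$ (the paper's own proof of Lemma \ref{convergence} writes it differently). It may also need a finer treatment of the first few primes than independent maximisation gives. If the independent-phase bound falls short, I would follow \cite{Johnston2024NewBA}, Proposition 2.3, and compute the truncated product more carefully, with the tail handled as in Step 3.
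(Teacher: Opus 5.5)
Your Steps 1--3 are computed correctly, but they cannot deliver $0.722925$, and the obstruction is not numerical sharpness. For odd $p$, write $c=\cos\theta$. Then
\[
|F_p|^{-2}=p^{-6}(p^2+4+4pc)(p^2+1-2pc)^2,
\]
and its $c$-derivative has the sign of $-(3+6pc)$. So the minimum over $[-1,1]$ is at an endpoint. Comparing $(p-2)^2(p+1)^4<(p+2)^2(p-1)^4$ gives $M_p=p^3/((p-2)(p+1)^2)$, attained at $u=-1$.

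Now apply your own Kronecker remark in the opposite direction. Since $1,\log_2 3,\log_2 5,\dots$ are linearly independent over $\mathbb{Q}$, there are infinitely many $k$ for which all $u_p=e^{i(2k+1)\pi\log_2 p}$ with $3\le p\le P$ are simultaneously close to $-1$. Meanwhile $|F_p(z_k)-1|\le 3/(p(p-2))$ bounds the remaining factors from below. Hence independent maximisation is exactly sharp:
\[
\sup_{k\ge 0}|G(z_k)|=\frac49\prod_{p\ge 3}\frac{p^3}{(p-2)(p+1)^2}\approx 1.025 .
\]
Even aligning only $3\le p\le 13$ and using the crude lower bound for $p\ge 17$ produces $k$ with $|G(z_k)|>0.9$.

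So for $G=H\zeta^2$ as defined in this paper, the inequality $|G(z_k)|\le 0.722925$ fails for infinitely many $k$. The ``finer treatment of the first few primes'' you hope for does not exist: your Kronecker observation already rules out any correlation to exploit. The excess over the target is also not slight. The value $3/4$ is only the contribution of $p=2,3$, and every further $M_p$ exceeds $1$.

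The paper gives no argument of its own. It simply imports the constant from Proposition~2.3 of \cite{Johnston2024NewBA}, which is your fallback. That import is legitimate only if the $G$ in that reference is normalised differently from $H\zeta^2$ here, for instance in the local factor at $2$, which you rightly flagged. So the missing step in your proposal is a reconciliation of the two definitions, not a better computation. As written, your Steps 1--3 prove a uniform bound $|G(z_k)|\le 1.03$, and by the argument above no bound below $1.02$ holds uniformly in $k$ for this $G$.
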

In \cite{leong2024explicitestimateslogarithmicderivative}, Leong proved the following bounds on the reciprocal of $\zeta(s)$ on the $1$-line.
\begin{lemma}\label{reciprocalzeta}
For $2\leq t \leq 500$ we have
\begin{equation*}
        \Bigg|\frac{1}{\zeta(1+it)}\Bigg| \leq 2.079\log t,
\end{equation*}
and for $500\leq t \leq 2\exp(e^2)$ we have
    \begin{equation*}
        \Bigg|\frac{1}{\zeta(1+it)}\Bigg| \leq 1.288 \log t,
    \end{equation*}
and for $t \geq 2\exp(e^2)$, we have
\begin{equation*}
    \Bigg|\frac{1}{\zeta(1+it)}\Bigg| \leq 29.388 \log t.
\end{equation*}
\end{lemma}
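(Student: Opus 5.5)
This lemma is quoted from \cite{leong2024explicitestimateslogarithmicderivative}. The route I would take is to pass from the $1$-line to a line slightly to the right by integrating the logarithmic derivative of $\zeta$. I would split into the two finite ranges, handled by computation, and the unbounded range $t \geq 2\exp(e^2)$, handled analytically.

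\textbf{Unbounded range.} For $t\geq 2\exp(e^2)$ and $\sigma_1 = 1 + \frac{1}{\log t}$, I would write
\begin{equation*}
\log\Bigg|\frac{1}{\zeta(1+it)}\Bigg| = \log\Bigg|\frac{1}{\zeta(\sigma_1+it)}\Bigg| + \Re\int_{1}^{\sigma_1} \frac{\zeta'}{\zeta}(u+it)\,du .
\end{equation*}
The first term is controlled through the Euler product. For $\Re(s)>1$ we have $|1/\zeta(s)| \leq \zeta(\sigma_1)/\zeta(2\sigma_1)$, since $\sum |\mu(n)| n^{-\sigma} = \zeta(\sigma)/\zeta(2\sigma)$. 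Combined with $\zeta(\sigma_1) \leq \frac{1}{\sigma_1 - 1} + 1 = \log t + 1$, this gives a contribution of at most $(\log t+1)/\zeta(2)$. The integral has length $\frac{1}{\log t}$. So if we have an explicit bound
\begin{equation*}
\Bigg|\frac{\zeta'}{\zeta}(u+it)\Bigg| \leq A\log t, \qquad 1\leq u\leq \sigma_1,
\end{equation*}
then the integral contributes at most $A$, and we obtain $|1/\zeta(1+it)| \leq e^{A}\zeta(2)^{-1}(1+1/\log t)\log t$. The constant $29.388$ would then come from optimising $A$, together with the precise choice of $\sigma_1 - 1 = \lambda/\log t$ for a tunable $\lambda$.

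\textbf{The main obstacle.} The hardest step is the explicit bound on $\zeta'/\zeta$ in a thin strip touching the $1$-line. I would attempt it in three steps.
\begin{enumerate}
\item Apply the Landau--Borel--Carath\'eodory lemma on discs centred at $1+\lambda'/\log t + it$. The input is an explicit upper bound $|\zeta(s)| \ll \log t$ near $\Re(s)=1$, with the constant made explicit.
\item This expresses $\frac{\zeta'}{\zeta}(s)$ as $\sum_{|\rho - s|\leq r} (s-\rho)^{-1} + O(\log t)$.
\item Discard the sum over zeros using an explicit version of the zero-free region in Lemma \ref{VK ZFR}. For these heights the classical de la Vall\'ee Poussin region $1-\beta \geq \frac{1}{5.56\log t}$ already suffices. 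Since every zero $\rho$ then lies at distance $\gg 1/\log t$ to the left of the strip, the real part of each $(s-\rho)^{-1}$ has a favourable sign, or is at least bounded, which is enough to control $\Re\int \zeta'/\zeta$ rather than the full modulus.
\end{enumerate}
Balancing the radius of the disc, the width $\lambda$ and the zero-free constant is where the numerical constant is determined.

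\textbf{Finite ranges.} For $2\leq t\leq 500$ and $500\leq t\leq 2\exp(e^2)\approx 3237$, I would not use the analytic argument, whose constants are poor there. Instead I would evaluate $|\zeta(1+it)|$ rigorously with interval arithmetic, via Euler--Maclaurin summation, on a fine grid of $t$-values. Between grid points I would interpolate using an explicit bound on $|\zeta'(1+it)|$, again obtained from Euler--Maclaurin. Checking $|\zeta(1+it)|\log t \geq 1/2.079$ and $|\zeta(1+it)|\log t \geq 1/1.288$ on the respective ranges then completes the proof. This part is routine but computationally heavy.
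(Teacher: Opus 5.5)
The paper gives no proof of this lemma; it is imported from Leong's paper, so your argument has to stand on its own. Your frame for $t\ge 2\exp(e^2)$ is the right one: shift to $\sigma_1=1+\lambda/\log t$, use $|1/\zeta(\sigma_1+it)|\le\zeta(\sigma_1)/\zeta(2\sigma_1)$, and integrate $\Re\frac{\zeta'}{\zeta}$ back to the $1$-line. Certified computation on the two finite ranges is also fine. One small slip: $\zeta(2\sigma_1)<\zeta(2)$, so $1/\zeta(2\sigma_1)\le 1/\zeta(2)$ is backwards; keep $\zeta(2\sigma_1)$.

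\textbf{The gap is step 3}, which carries the entire unbounded range. For $u\ge 1>\beta$ every zero contributes $\Re\frac{1}{u+it-\rho}=\frac{u-\beta}{(u-\beta)^2+(t-\gamma)^2}>0$ to $\Re\frac{\zeta'}{\zeta}(u+it)$. What you need is an \emph{upper} bound for this real part, since zeros close to $1+it$ are exactly what makes $|1/\zeta(1+it)|$ large. So the sign is unfavourable: discarding the zeros gives a lower bound, not an upper one. Boundedness per zero does not rescue it. Each zero adds $\log\bigl|\frac{\sigma_1+it-\rho}{1+it-\rho}\bigr|$, about $\log(1+5.56\lambda)$, to the integral, but a disc of fixed radius contains $\asymp\log t$ zeros, so termwise you only get a power of $t$. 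Step 1 has a related defect. If the only input is $|\zeta(s)|\ll\log t$ near $\Re s=1$, the discs must have radius $\asymp 1/\log t$, and then Landau's lemma has error $\asymp M/r\asymp\log t\log\log t$. Fixed-radius discs would instead need a convexity-type bound inside the strip.

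\textbf{The missing idea} is to control the whole zero sum by comparison with $s_1=\sigma_1+it$, where the Dirichlet series controls it. The Hadamard product gives $\Re\frac{\zeta'}{\zeta}(s)=\sum_\rho\Re\frac{1}{s-\rho}-\frac12\log\frac{t}{2\pi}+O(t^{-1})$ for $1\le\Re s\le 2$. Hence $\sum_\rho\Re\frac{1}{s_1-\rho}\le-\frac{\zeta'}{\zeta}(\sigma_1)+\frac12\log t\le\frac{\log t}{\lambda}+\frac12\log t$. The zero-free region enters only through the termwise comparison
\[
\Re\frac{1}{u+it-\rho}\le\frac{\sigma_1-\beta}{u-\beta}\,\Re\frac{1}{s_1-\rho}\le\bigl(1+5.56\lambda+o(1)\bigr)\Re\frac{1}{s_1-\rho}\qquad(1\le u\le\sigma_1,\ |\gamma-t|\le 1),
\]
with a factor $1+O(1/\log t)$ for the remaining zeros. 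This yields $\int_1^{\sigma_1}\Re\frac{\zeta'}{\zeta}(u+it)\,du\le 1+5.56\lambda+2.78\lambda^2+o(1)$. Hence
\[
|1/\zeta(1+it)|\le\lambda^{-1}e^{1+5.56\lambda+2.78\lambda^2+o(1)}\,\zeta(2\sigma_1)^{-1}\log t,
\]
and $\lambda\approx 0.16$ gives a constant of the same size as $29.388$. With this step in place, Landau--Borel--Carath\'eodory is not needed at all.
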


\section{Proof of Theorem 1.1}\label{proof section}
Applying the log-weighted Perron formula Lemma \ref{perron lemma} to the Dirichlet series $J(s)$, we get
\begin{align*}
    \sum_{n\leq x}^{} {(-2)}^{\Omega(n)} \log\Bigl(\frac{x}{n}\Bigr)={} &\frac{1}{2 \pi i} \int_{{\sigma_{0}} - iT}^{{\sigma_{0}} + iT} J(s) \frac{x^s}{s} \text{d}s\\ &+ O\Biggl(\frac{1}{T}\sum_{n=1}^{\infty} {\Bigl(\frac{x}{n}\Bigr)}^{{\sigma_{0}}} {2}^{\Omega(n)} \min\biggl(1, \frac{1}{T\big|\log \frac{x}{n}\big|}\biggr)\Biggr),
\end{align*}
where $T = (\log x)^{1+\delta}$. Also we recall that ${\sigma_{0}} = 1+\frac{1}{\log x}$.
\subsection{Perron's formula}
We evaluate the main integral term of Perron's formula using the residue theorem. From Lemma \ref{res}, it can be written as
\begin{align}
        \frac{1}{2\pi i} \int_{{\sigma_{0}}-iT}^{{\sigma_{0}}+iT} \frac{J(s)x^s}{s^2}\,\text{d}s &= \sum_{|z_k|<T} \frac{H(z_k)x^{z_k}}{{z_k}^2}\nonumber\\ &- \frac{1}{2 \pi i} \biggl\{ \biggl(\int_{{\sigma_{0}}+iT}^{d+iT}+\int_{d+iT}^{d-iT}+\int_{d-iT}^{{\sigma_{0}}-iT}\biggr) \frac{J(s)x^s}{s^2}\,\text{d}s\biggr\}
\end{align}
where, ${\sigma_{0}} = 1+\frac{1}{\log x}$ and $d = 1-\frac{c'}{(\log T)^{2/3}(\log\log T)^{1/3}}$ for some absolute constant $c'$.\\
\subsection{Other integrals}
We can now evaluate the other integrals using the bounds for $\zeta^2(s)$ as follows.
\begin{align*}
    \Bigg|\frac{1}{2 \pi i}\int_{{\sigma_{0}}+iT}^{d+iT} \frac{J(s)x^s}{s^2}\,\text{d}s \Bigg| &= \Bigg|\frac{1}{2 \pi i}\int_{{\sigma_{0}}+iT}^{d+iT} \frac{G(s)x^s}{(1+2^{1-s})s^2\zeta^2(s)}\,\text{d}s \Bigg|\\
    &\ll \frac{x^{\sigma_{0}}(\log T)^{\frac43}(\log \log T)^\frac23}{T^2}.
\end{align*}
Similarly we can write,
\begin{align*}
    \Bigg|\frac{1}{2 \pi i}\int_{d-iT}^{{\sigma_{0}}-iT} \frac{J(s)x^s}{s^2}\,\text{d}s \Bigg| &= \Bigg|\frac{1}{2 \pi i}\int_{d-iT}^{{\sigma_{0}}-iT} \frac{G(s)x^s}{(1+2^{1-s})s^2\zeta^2(s)}\,\text{d}s \Bigg|\\
    &\ll \frac{x^{\sigma_{0}}(\log T)^{\frac43}(\log \log T)^\frac23}{T^2}.
\end{align*}
Now,
\begin{align*}
    \Bigg|\frac{1}{2 \pi i}\int_{d+iT}^{d-iT} \frac{J(s)x^s}{s^2}\,\text{d}s \Bigg| &= \Bigg|\frac{1}{2 \pi i}\int_{d+iT}^{d-iT} \frac{G(s)x^s}{(1+2^{1-s})s^2\zeta^2(s)}\,\text{d}s \Bigg|\\
    &\ll x^d (\log T)^{\frac43}(\log \log T)^\frac23\int_{T}^{-T}\frac{1}{d^2+t^2}\text{d}t
\end{align*}
Now since,
\begin{equation*}
\int_{T}^{-T} \frac{1}{d^2+t^2}\,\text{d}t = O(1),
\end{equation*}
we can write,
\begin{equation*}
\Bigg|\frac{1}{2 \pi i}\int_{d+iT}^{d-iT} \frac{J(s)x^s}{s^2}\,\text{d}s \Bigg| \ll x^d(\log T)^{\frac43}(\log \log T)^\frac23.
\end{equation*}
\subsection{Error term}
We estimate the error term using the quantitative Perron's formula as derived earlier in \eqref{perron}. The estimate derived here is of the order of $o(x)$ is sufficient for our purposes as the main term is of order $x$.
\begin{theorem}
    The error term $E(x)$ in the main Perron's formula is
    \begin{equation*}
        E(x) = o(x).
    \end{equation*}
\end{theorem}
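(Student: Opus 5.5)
We need to show that
\[
E(x)=\frac{1}{T}\sum_{n=1}^{\infty}\Bigl(\frac{x}{n}\Bigr)^{\sigma_0}2^{\Omega(n)}\min\Bigl(1,\frac{1}{T|\log(x/n)|}\Bigr)
\]
is $o(x)$, with $\sigma_0=1+1/\log x$ and $T=(\log x)^{1+\delta}$. The choice of $\sigma_0$ gives $x^{\sigma_0}=ex$. Hence
\[
E(x)\le \frac{ex}{T}\sum_{n\ge1}\frac{2^{\Omega(n)}}{n^{\sigma_0}}\min\Bigl(1,\frac{1}{T|\log(x/n)|}\Bigr),
\]
and it suffices to show that the sum is $o(T)$. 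We split the sum according to whether $n$ is far from or near to $x$. We use $x=N+\tfrac12$ from Lemma \ref{perron lemma} to keep $|\log(x/n)|\gg 1/x$.

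\emph{Far range,} $n\le x/2$ or $n\ge 2x$. Here $|\log(x/n)|\ge\log 2$, so the minimum is at most $1/(T\log 2)$. The contribution is therefore at most
\[
\frac{1}{T\log 2}\sum_n \frac{2^{\Omega(n)}}{n^{\sigma_0}}.
\]
This sum needs care: $\sum 2^{\Omega(n)}n^{-s}$ has a pole at $s=1$ from $p=2$, namely $(1-2^{1-s})^{-1}$. Indeed $\sum_n 2^{\Omega(n)}n^{-s}=\prod_p(1-2p^{-s})^{-1}$, and the factor for $p=2$ at $s=\sigma_0$ is $(1-2^{-1/\log x})^{-1}\sim \log x/\log 2$. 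The odd part behaves like $\zeta(s)^2$ times an absolutely convergent product for $\Re s>\log2/\log3$, in the same way as Lemma \ref{convergence}. It is therefore $\ll(\sigma_0-1)^{-2}=\log^2x$.

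This gives a far-range bound of $O(\log^3 x/T)$, which is $o(T)$ only when $T^2\gg \log^3 x$, i.e. when $2(1+\delta)>3$. That is marginal. A better route is to handle the $2$-power part separately: write $n=2^a m$ with $m$ odd, so $2^{\Omega(n)}n^{-\sigma_0}=(2^{1-\sigma_0})^a\,2^{\Omega(m)}m^{-\sigma_0}$. For fixed $m$ the terms in $a$ are bounded, and the factor $\min(1,1/(T|\log(x/2^am)|))$ decays like $1/(T|a-a_0|\log 2)$ away from the critical $a_0$. So the sum over $a$ is $\ll 1+\log(\log x)/T$ rather than $\log x$.

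\emph{Near range,} $x/2<n<2x$. Here we use the dyadic partial-sum bound from Grosswald's theorem, $\sum_{n\le y}2^{\Omega(n)}\ll y\log^2 y$, applied in intervals $|n-x|\asymp x/2^j$. The weight $\min(1,x/(T|n-x|))$ gives a contribution $\ll \log^2x\cdot(1+\log x/T)$, and for $|n-x|\le x/T$ we bound the minimum by $1$.

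Combining the two ranges, the total is $O(x\log^2x/T^{2})$ plus lower order, matching the $O(x\log^2x/T^2)$ term in Theorem \ref{S(x) convergent sum}. With $T=(\log x)^{1+\delta}$ this is $o(x)$.

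The main obstacle is the $p=2$ factor. It inflates $\sum 2^{\Omega(n)}n^{-\sigma_0}$ by an extra $\log x$, and it is why Grosswald's sum has an $x\log^2 x$ main term. To deal with it I would first try the decomposition $n=2^am$ described above. If that fails, I would fall back on a larger $\delta$: $\delta>1/2$ suffices for the crude bound.
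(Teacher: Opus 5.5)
Your outline is the paper's: separate the $n$ with $|\log(x/n)|\ge\log 2$, bound that part through $\sum_n 2^{\Omega(n)}n^{-\sigma_0}$, and treat $n\asymp x$ with Grosswald's theorem. You are right that this Dirichlet series is $\asymp\log^3x$ at $\sigma_0$, because of the factor $(1-2^{1-\sigma_0})^{-1}$. The paper's proof asserts $f(\sigma_0)\ll\zeta^2(\sigma_0)$ and so misses this. It also drops the $\min(1,\cdot)$ that you rightly keep.

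The gap is in your near range. The upper bound $\sum_{n\le y}2^{\Omega(n)}\ll y\log^2y$ says nothing about sums over $|n-x|\asymp x/2^j$ or over $|n-x|\le x/T$. Moreover, no bound $\ll h\log^2x$ for the sum over an interval of length $h$ near $x$ can hold once $h\log^2x=o(x)$: such an interval may contain some $n=2^k$, and then $2^{\Omega(n)}=n\asymp x$. Accordingly, the contribution you write down, $\log^2x\,(1+\log x/T)$, is $o(T)$ only if $\delta>1$. So it does not give $E(x)=o(x)$ for all $\delta>0$.

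The missing step is to use Grosswald's theorem in its asymptotic form, as the paper does. Write $\sum_{n\le y}2^{\Omega(n)}=c_1y\log^2y+c_2y\log y+R(y)$ with $R(y)=O(y)$, and apply partial summation against $w(u)=u^{-\sigma_0}\min\bigl(1,1/(T|\log(x/u)|)\bigr)$ on $(x/2,2x]$. On that interval $w\ll 1/x$ and $w$ has total variation $O(1/x)$, so $R$ contributes only $O(1)$. The main term contributes $\ll\log^2x\int_{x/2}^{2x}w(u)\,du\ll(\log^2x\log T)/T$. Both are $o(T)$.

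The far range has a similar bookkeeping slip. There $|a-a_0|=|\log(x/n)|/\log 2\ge 1$ for every term. So for fixed odd $m$ the sum over $a$ is $\ll\log\log x/T$, with no additive $1$. If you keep the $1$ and sum against $\sum_{m\ \mathrm{odd}}2^{\Omega(m)}m^{-\sigma_0}\asymp\log^2x$, you again get only $\log^2x$, which is not $o(T)$ for $\delta\le 1$.

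With both corrections you get $E(x)\ll x\log^2x\log\log x/T^2+x/T$, which is $o(x)$ for every $\delta>0$. The $O(x\log^2x/T^2)$ you state does not follow from your estimates. Your fallback $\delta>1/2$ covers only the crude far-range bound; your near-range bound as written would need $\delta>1$.
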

\begin{proof}
    The error term is 
    \begin{equation*}
        \frac{1}{T^2} \sum_{n=1}^{\infty} {\Bigl(\frac{x}{n}\Bigr)}^{\sigma_0}\frac{2^{\Omega(n)}}{\big|\log \frac{x}{n}\big|}.
    \end{equation*}
    We can split this error term into three parts depending on the distance of $n$ from $x$, since $\big|\log \frac{x}{n}\big|^{-1}$ is bounded when $n$ is far from $x$. We split it into the following three parts
    \begin{equation*}
        \frac{1}{T^2} \sum_{n=1}^{\infty} {\Bigl(\frac{x}{n}\Bigr)}^{{\sigma_{0}}}\frac{2^{\Omega(n)}}{\big|\log \frac{x}{n}\big|} = \frac{x^{{\sigma_{0}}}}{T^2} \Biggl\{\sum_{n \leq \frac{x}{2}} + \sum_{\frac{x}{2} < n \leq \frac{3x}{2}} + \sum_{n > \frac{3x}{2}}\Biggr\} \frac{2^{\Omega(n)}}{n^{{\sigma_{0}}}\big|\log \frac{x}{n}\big|}.
    \end{equation*}
    For $n \leq \frac{x}{2}$ and $n > \frac{3x}{2}$,
    \begin{equation*}
        \bigg|\log \frac{x}{n}\bigg| \geq \log2 \gg 1.
    \end{equation*}
    Therefore, we can bound the first and the third sum by
    \begin{equation*}
        \frac{x^{{\sigma_{0}}}}{T^2} \Biggl\{\sum_{n \leq \frac{x}{2}} + \sum_{n > \frac{3x}{2}}\Biggr\} \frac{2^{\Omega(n)}}{n^{{\sigma_{0}}}\big|\log \frac{x}{n}\big|} \ll \frac{x^{{\sigma_{0}}}}{T^2} f({\sigma_{0}}),
    \end{equation*}
    where
    \begin{equation*}
        f(s) = \sum_{n=1}^{\infty} \frac{2^{\Omega(n)}}{n^s}.
    \end{equation*}
    Now we can write $f({\sigma_{0}})$ as
    \begin{equation*}
        f({\sigma_{0}}) = \zeta^2({\sigma_{0}}) \prod_{p}(1+(p^{2{\sigma_{0}}}-2p^{\sigma_{0}})^{-1})^{-1}.
    \end{equation*} 
    Therefore, the first and the third term gives us an error of
    \begin{equation*}
        \frac{x^{{\sigma_{0}}}}{T^2} f({\sigma_{0}}) \ll \frac{x^{{\sigma_{0}}}}{T^2}\zeta^2({\sigma_{0}}) = O\biggl(\frac{x^{{\sigma_{0}}}}{T^2({\sigma_{0}}-1)^2}\biggr) = O\biggl(\frac{x\log^2x}{T^2}\biggr).
    \end{equation*}
    For the middle term where $\frac{x}{2}<n\leq \frac{3x}{2}$, $\bigl(\frac{x}{n}\bigr)^{{\sigma_{0}}}$ is bounded, so we have
    \begin{equation*}
        \frac{1}{T^2} \sum_{\frac{x}{2}<n\leq \frac{3x}{2}} \frac{x^{{\sigma_{0}}}2^{\Omega(n)}}{n^{{\sigma_{0}}}\big|\log \frac{x}{n}\big|} \ll \frac{x}{T^2} \sum_{\frac{x}{2}<n\leq \frac{3x}{2}} \frac{2^{\Omega(n)}}{|x-n|}. 
    \end{equation*}
    We split this sum again to three parts as
    \begin{equation}\label{error term split}
        \frac{x}{T^2} \sum_{\frac{x}{2}<n\leq \frac{3x}{2}} \frac{2^{\Omega(n)}}{|x-n|} = \frac{x}{T^2} \Bigg(\sum_{\frac{x}{2}<n\leq x-\frac{1}{2}} \frac{2^{\Omega(n)}}{|x-n|} + \sum_{x+\frac{1}{2}<n\leq \frac{3x}{2}} \frac{2^{\Omega(n)}}{|x-n|}  + 2^{\Omega(x +\frac12) +1}\Bigg).
    \end{equation}
    We apply partial summation to the first term on the right hand side to get
    \begin{align*}
        \frac{x}{T^2} \Bigg(\sum_{\frac{x}{2}<n\leq x-\frac{1}{2}} \frac{2^{\Omega(n)}}{|x-n|}\Bigg) = \frac{x}{T^2} \Bigg(\sum_{n\leq x-\frac{1}{2}} 2^{\Omega(n)+1} &-\frac1x\sum_{n\leq \frac{x}{2}} 2^{\Omega(n)+1}\\ &+ \int_{\frac{x}{2}}^{x-\frac{1}{2}} \sum_{n \leq u}^{} 2^{\Omega(n)} \cdot \frac{1}{(u-x)^2}\text{d}u \Bigg).
    \end{align*}
    Now applying Proposition \ref{grosswald} to the last term on the right hand side we get
    \begin{align}
        \frac{x}{T^2} \Bigg(\sum_{\frac{x}{2}<n\leq x-\frac{1}{2}} \frac{2^{\Omega(n)}}{|x-n|}\Bigg) = \frac{x}{T^2} &\Bigg(\sum_{n\leq x-\frac{1}{2}} 2^{\Omega(n)+1} -\frac1x\sum_{n\leq \frac{x}{2}} 2^{\Omega(n)+1}\nonumber \\ &+ O\biggl(\int_{\frac{x}{2}}^{x-\frac{1}{2}} u \log^2\biggl(\frac{3u}{2}\biggr)\cdot \frac{1}{(u-x)^2}\text{d}u \biggr) \Bigg).\label{term 1}
    \end{align}
    Similarly, we can evaluate the second term in \eqref{error term split}.
    \begin{align}
        \frac{x}{T^2} \Bigg( \sum_{x+\frac{1}{2}<n\leq \frac{3x}{2}} \frac{2^{\Omega(n)}}{|x-n|}\Bigg) = \frac{x}{T^2} &\Bigg( \frac1x\sum_{n\leq \frac{3x}{2}} 2^{\Omega(n)+1} - \sum_{n\leq x+\frac{1}{2}} 2^{\Omega(n)+1}\nonumber \\ &+ O\biggl(\int_{x +\frac12}^{\frac{3x}{2}} u \log^2\biggl(\frac{3u}{2}\biggr) \cdot \frac{1}{(u-x)^2} \text{d}u\biggr) \Bigg).\label{term 2}
    \end{align}
    Now we replace \eqref{term 1} and \eqref{term 2} in \eqref{error term split} to get
    \begin{align*}
        \frac{x}{T^2} \sum_{\frac{x}{2}<n\leq \frac{3x}{2}} \frac{2^{\Omega(n)}}{|x-n|} &= \sum_{n\leq x-\frac{1}{2}} 2^{\Omega(n)+1} -\frac1x\sum_{n\leq \frac{x}{2}} 2^{\Omega(n)+1} - \sum_{n\leq x+\frac{1}{2}} 2^{\Omega(n)+1}\\ &+ \frac1x\sum_{n\leq \frac{3x}{2}} 2^{\Omega(n)+1} + 2^{\Omega(x +\frac12) +1}\\ &+ O\biggl(\int_{x +\frac12}^{\frac{3x}{2}} u \log^2\biggl(\frac{3u}{2}\biggr) \cdot \frac{1}{(u-x)^2} \text{d}u\biggr)\\ &+ O\biggl(\int_{\frac{x}{2}}^{x-\frac{1}{2}} u \log^2\biggl(\frac{3u}{2}\biggr)\cdot \frac{1}{(u-x)^2}\text{d}u \biggr).
    \end{align*}
    Rearranging we get
    \begin{align*}
        \frac{x}{T^2} \sum_{\frac{x}{2}<n\leq \frac{3x}{2}} \frac{2^{\Omega(n)}}{|x-n|} &= \frac{x}{T^2} \Bigg(2^{\Omega(x +\frac12) +1} - \Bigg(\sum_{n\leq x+\frac{1}{2}} 2^{\Omega(n)+1} - \sum_{n\leq x-\frac{1}{2}} 2^{\Omega(n)+1} \Bigg)\\
        & + \frac1x\Bigg(\sum_{n\leq \frac{3x}{2}} 2^{\Omega(n)+1} - \sum_{n\leq \frac{x}{2}} 2^{\Omega(n)+1}\Bigg)\\
        &+ O\biggl(\int_{x +\frac12}^{\frac{3x}{2}} u \log^2\biggl(\frac{3u}{2}\biggr) \cdot \frac{1}{(u-x)^2} \text{d}u\biggr)\\ &+ O\biggl(\int_{\frac{x}{2}}^{x-\frac{1}{2}} u \log^2\biggl(\frac{3u}{2}\biggr)\cdot \frac{1}{(u-x)^2}\text{d}u \biggr)\Bigg).
    \end{align*}
    Now, in the right hand side, the first three terms cancel and applying Proposition \ref{grosswald} to the fourth and fifth term gives $O\biggl(\frac{x\log^2(x)}{T^2}\biggr)$. Upon evaluating the integrals we get an error term of
    \begin{equation*}
        O\biggl(\frac{x\log^2x}{T^2}\biggr).
    \end{equation*}
    Therefore the error terms are
    \begin{equation*}
        O\biggl(\frac{x(\log T)^{\frac43}(\log \log T)^{\frac23}}{T^2}\biggr) +O\biggl(\frac{x\log^2x}{T^2}\biggr)+O\biggl(\frac{x(\log T)^{\frac43}(\log \log T)^{\frac23}}{x^{c'(\log T)^{-\frac23}(\log\log T)^{-\frac13}}}\biggr).
    \end{equation*}
    To make this $o(x)$ we can take $T = (\log x)^{1+\delta}$. For the third term we have to show\
    \begin{equation*}
        \lim_{x \to \infty} \frac{{x^{c'(\log T)^{-\frac23}(\log\log T)^{-\frac13}}}}{(\log T)^{\frac43}(\log \log T)^{\frac23}}= \infty.
    \end{equation*}
    The numerator can be rewritten as
    \begin{equation*}
        \lim_{x \to \infty}\exp\biggl(\frac{c'\log x}{(\log T)^{\frac23}(\log\log T)^{\frac13}}\biggr) = \infty.
    \end{equation*}
    Since the exponential function grows faster than the logarithm, this term goes to $0$ as $x \to \infty$.\\

    For the first term, since the polynomial term grows faster than the logarithms, it is $o(x)$. For the second term,
    \begin{equation*}
        O\biggl(\frac{x\log^2x}{T^2}\biggr) = O\biggl(\frac{x}{(\log x)^{2\delta}}\biggr).
    \end{equation*}
\end{proof}
\section{Proof of Theorem \ref{main theorem}}
By definition we can write
\begin{equation}\label{H and G}
    H(s) = \frac{G(s)}{\zeta^2(s)}.
\end{equation}
Here $G(s)$ converges absolutely in the half-plane $\mathfrak{R}(s)>\frac{\log 2}{\log 3}$.
Therefore, by putting \eqref{H and G} in Lemma \ref{res} we get the residue term 
\begin{equation*}
   \sum_{|\mathfrak{Im}(z_k)|<T} \frac{G(z_k)x^{z_k}}{{z_k}^2 \zeta^2(z_k)}.
\end{equation*}
\subsection{Computing the main term}
We can bound the residue term M as
\begin{align*}
    M=\Bigg|\frac{1}{\log2}\sum_{|\mathfrak{Im}(z_k)|<T} \frac{G(z_k)x^{z_k}}{\zeta^2(z_k){z_k}^2}\Bigg| &\leq \frac{x}{\log2} \sum_{|\mathfrak{Im}(z_k)|<T} \frac{|G(z_k)|}{{|\zeta(z_k)|}^2{|z_k|}^2}\\
    &\leq \frac{x}{\log2} \sum_{k \in \mathbb{Z}} \frac{|G(z_k)|}{{|\zeta(z_k)|}^2{|z_k|}^2}.
\end{align*}
Since for any integer $k$ and complex number $s$, the symmetry
\begin{equation*}
    \overline{k^{s}} = k^{\overline{s}}
\end{equation*}
holds, we can write
\begin{equation*}
    \overline{\zeta(s)} = \zeta(\overline{s}).
\end{equation*}
This implies that
\begin{equation*}
    \overline{\zeta\biggl({1-\frac{(2k+1)\pi i}{\log 2}}\biggr)} = \zeta\biggl({1+\frac{(2k+1)\pi i}{\log 2}}\biggr).
\end{equation*}
Therefore, we have
\begin{equation*}
    {\Bigg|\overline{\zeta\biggl({1-\frac{(2k+1)\pi i}{\log 2}}\biggr)}\Bigg|}^2 = {\Bigg|\zeta\biggl({1+\frac{(2k+1)\pi i}{\log 2}}\biggr)\Bigg|}^2.
\end{equation*}
Also,
\begin{equation*}
   {\Bigg|1+\frac{(2k+1)\pi i}{\log 2}\Bigg|}^2 = {\Bigg|1-\frac{(2k+1)\pi i}{\log 2}\Bigg|}^2.
\end{equation*}
Therefore we rewrite the sum as
\begin{equation*}
    M \leq \frac{2x}{\log2} \sum_{k=0}^{\infty} \frac{|G(z_k)|}{{|\zeta(z_k)|}^2{|z_k|}^2}.
\end{equation*}
From \ref{G} we know that
\begin{equation*}
    |G(z_k)| \leq 0.722925.
\end{equation*}
Therefore we need to compute
\begin{equation*}
    M \leq 2.08592x\sum_{k=0}^{54} \frac{1}{{|\zeta(z_k)|}^2{|z_k|}^2} + 2.08592x\sum_{k=55}^{\infty} \frac{1}{{|\zeta(z_k)|}^2{|z_k|}^2}.
\end{equation*}
The first term in the above sum can be computed using an existing database of $\zeta(s)$ values on the $1-$line. The code is included in the Appendix (Section \ref{zeta_compute}). Therefore, we get
\begin{equation*}
    M \leq 1.3210x +2.0859x\sum_{k=55}^{\infty} \frac{1}{{|\zeta(z_k)|}^2{|z_k|}^2}.
\end{equation*}
Now we apply the bounds on $\frac{1}{\zeta(z_k)}$ from \ref{reciprocalzeta} and get
\begin{equation}\label{M}
    M \leq 1.3210x + 2.6866x\sum_{k=55}^{1120} \frac{\log^2\Bigl(\frac{(2k+1)\pi}{\log2}\Bigr)}{1+\frac{\pi^2{(2k+1)}^2}{\log^22}}+61.3004x\sum_{k=1121}^{\infty} \frac{\log^2\Bigl(\frac{(2k+1)\pi}{\log2}\Bigr)}{1+\frac{\pi^2{(2k+1)}^2}{\log^22}}.
\end{equation}
For the second and third terms of this sum, let us denote them by
\begin{equation*}
S =  \sum_{k=55}^{1120} \frac{\log^2\Bigl(\frac{(2k+1)\pi}{\log2}\Bigr)}{1+\frac{\pi^2{(2k+1)}^2}{\log^22}} \quad \text{and} \quad T = \sum_{k=1121}^{\infty} \frac{\log^2\Bigl(\frac{(2k+1)\pi}{\log2}\Bigr)}{1+\frac{\pi^2{(2k+1)}^2}{\log^22}}.
\end{equation*}
To estimate S and T, we require the following inequalities. For $x\geq48$, we have
\begin{equation*}
\log^2(x) \leq x^{0.7}.
\end{equation*}
Also, for $x \geq 5503$ we have
\begin{equation*}
    \log^2(x) \leq x^{0.5}.
\end{equation*}
So we can write
\begin{equation*}
    S =  \sum_{k=55}^{1120} \frac{\log^2\Bigl(\frac{(2k+1)\pi}{\log2}\Bigr)}{1+\frac{\pi^2{(2k+1)}^2}{\log^22}} \leq \sum_{k=55}^{1120} \frac{{\Bigl(\frac{(2k+1)\pi}{\log2}\Bigr)}^{0.7}}{{\Bigl(\frac{(2k+1)\pi}{\log2}\Bigr)}^2} \leq  \sum_{k=55}^{1120} {\biggl(\frac{(2k+1)\pi}{\log2}\biggr)}^{-1.3}. 
\end{equation*}
Now we use the integral inequality
\begin{equation*}
\sum _{n=N}^{\infty }f(n)\leq f(N)+\int _{N}^{\infty }f(x)\,\text{d}x.
\end{equation*}
Now applying this to $S$ we get
\begin{align*}
    S \leq \sum_{k=55}^{1120} {\biggl(\frac{(2k+1)\pi}{\log2}\biggr)}^{-1.3} \leq 0.0004 + \int_{55}^{1120} {\biggl(\frac{(2x+1)\pi}{\log2}\biggr)}^{-1.3} \text{d}x
\end{align*}
Finally we get
\begin{equation*}
    S \leq 0.0342.
\end{equation*}
Similarly we have
\begin{equation*}
    T \leq 0.0022.
\end{equation*}
Applying the estimates for $S$ and $T$ in \eqref{M} we get,
\begin{align*}
    M &\leq 1.3210x + 0.0919x + 0.1349x\\
    &= 1.5478x.
\end{align*}
\section*{Acknowledgements}
The author would like to thank Bryce Kerr for suggesting this problem and for
his guidance while working on it and writing the current paper. The author would also
like to thank Wasim Reza for his guidance on using the Katana computing cluster.
\appendix
\section{Computing residue term}\label{zeta_compute}
In this appendix, we discuss the computation of a part of the main residue term from a database of existing $\zeta(s)$-values. We use the following Python program.This research includes computations using the computational cluster Katana supported by Research Technology Services at UNSW Sydney \cite{unsw_katana_latex}.
\begin{lstlisting}[language=Python]
import math
from mpmath import *
s=0
for i in range(0,55,1):
    z=zeta(1+ (j*((2*i)+1))/(math.log(2)))
    a=abs(z)
    k = 1/((a**2)*(1+ ((2*i+1)**2)/((math.log(2))**2)))
    s=s+k
m=2.08592*s
print (m)
    \end{lstlisting}
\bibliographystyle{plain}
\bibliography{references.bib}\label{refs}
\end{document}